\newtheorem{theorem}{Theorem}
\newtheorem{proposition}[theorem]{Proposition}
\newtheorem{lemma}[theorem]{Lemma}
\theoremstyle{definition}
\newcommand{\thr}[1]{\mathsf{#1}}
\newcommand{\df}[1]{\textbf{#1}}
\newcommand{\AD}{\thr{AD}}
\newcommand{\DC}{\thr{DC}}
\newcommand{\AC}{\thr{AC}}
\newcommand{\ZF}{\thr{ZF}}
\newcommand{\ZFC}{\thr{ZFC}}
\newcommand{\Det}{\thr{Det}}
\newcommand{\R}{\mathbb{R}}
\renewcommand{\P}{\mathbb{P}}
\newcommand{\J}{\mathbb{J}}
\newcommand{\set}[2]{\left\{ #1 \mid \  #2 \right\}}
\newcommand{\Ord}{\textnormal{Ord}}
\newcommand{\res}{\upharpoonright}
\newcommand{\tuple}[1]{\langle #1 \rangle}
\newcommand{\Aut}{\textnormal{Aut}}
\newcommand{\pow}{\mathcal{P}}
\title{A model with fragments of projective determinacy and failures of $\DC$}
\author{Sandra M\"uller, Bartosz Wcisło}
\begin{document}
	\maketitle
	
	\begin{abstract}
		We describe a construction of a model of second order arithmetic in which (boldface) $\bm{\Pi^1_n}$-determinacy holds, but (lightface) $\Pi^1_{n+2}$-$\DC$ fails, thus showing that no projective level of determinacy implies full $\DC_{\R}$. The construction builds upon the work of Gitman, Friedman, and Kanovei.
	\end{abstract}

	\section{Introduction}
	
	The Axiom of Determinacy is with no doubt one of the most studied principles in the foundations of mathematics. Proposed in \cite{Mycielski_Steinhaus}, it was immediately observed to be incompatible with the Axiom of Choice. Despite of this fact, it produces such a coherent image of what the universe of sets might look like that it is considered to be a serious competitor to its more classical counterpart. Moreover (and perhaps more importantly), under large cardinal assumptions various well-studied classes of ``simple'' sets actually provably satisfy determinacy, so understanding $\AD$ is the key tool in studying these classes. 
	
	Even though $\AD$ is inconsistent with $\AC$, it is consistent with its classical weakening, Dependent Choice, $\DC$. Very often, the applications of choice actually involve just $\DC$, so this assumption is significantly more important for the customary mathematical arguments than the full choice. 
	
	All the current constructions of a model in which $\AD$ holds produce a model in which Dependent Choice for real numbers, $\DC_{\R}$, holds as well (see \cite{DMT}). Therefore, for all what we know $\AD$ can actually imply $\DC_{\R}$ and it is an important open problem whether these two can be separated. 
	
	In this article, we provide a (very) partial solution to this question. We show that if we require that determinacy holds for sets of complexity $\bm{\Pi}^1_n$, $n\geq 1$ this does not entail that $\Pi^1_{n+2}$-$\DC_{\R}$ holds. More specifically, under (a necessary) large cardinal hypothesis, we show that there exists a model $M$ of $\ZF$ satisfying (boldface) $\bm{\Pi}^1_n$-determinacy in which there is a relation $R \subset \R^2$ such that for all $x$, there exists $y$ with $R(x,y)$, but there is no sequence $(x_n)_{n \in \omega}$ such that $R(x_n,x_{n+1})$ holds for all $n \in \omega$. In particular, it shows that the restriction of $\AD$ to any complexity class $\bm{\Pi}^1_n$ does not entail $\DC_{\R}$.
	
	Let us place this result in some context. By \cite{Solovay_AD_DC}, we know that $\AD$ is independent of the full $\DC$, so the question really concerns the dependent choice on $\R$. Moreover, by \cite{Kechris_DC}, if we assume that $\AD$ holds and $V = L(\R)$, this implies full $\DC$. Our result actually concerns the models of this form, so we know that the counterexamples which we produce in general have to include additional data if we want to push it further.

	Now, let us briefly describe the techniques used in our proof. In \cite{Abraham_a_minimal_model} Abraham, following \cite{Jensen_mimal_degree}, introduced a forcing which is itself defined inductively, roughly by adding to some initial very canonical subposet of the Sacks forcing further objects which arise as $L$-least generics for the poset constructed so far in given stage $\alpha$. This can be shown to yield various minimality properties. Most importantly, we can ensure that all reals added by this forcing are actually generic for the forcing. This idea was subsequently adapted by Kanovei in \cite{kanovei79}, and further by Gitman, Friedman, and Kanovei in \cite{fgk} to produce a model in which $\Pi^1_2$-$\DC$ fails, whereas full countable choice and full $\Sigma^1_{\infty}$ comprehension holds. More specifically, we can produce a tree of reals, arising from a generic for a certain inductively defined poset such that the reals in the tree are precisely the ones which are generic for all the posets defined in the stages of the inductive process which, in turn, are definable with a formula of second-order arithmetic.  
	
	This article is based mostly on two observations: first of all, the construction in \cite{fgk}, carried out in $L$, does not really require us to work in that model, but is based on some of its properties shared by many canonical inner models studied in set theory. Second, if the initial model has suitable large cardinal properties, certain amount of determinacy is guaranteed in the resulting model and will pass down to its symmetric extension which is where we actually obtain a failure of projective $\DC_{\R}$. 
	
	The first author gratefully acknowledges that this research was funded in part by the Austrian Science Fund (FWF) [10.55776/Y1498, 10.55776/I6087]. For the purpose of open access, the authors have applied a CC BY public copyright license to any Author Accepted Manuscript version arising from this submission. 
	
	The second author was supported by NAWA Bekker 2022 BPN/BEK/2022/1/00264 project ``Determinacy of games in the arithmetical setting,'' which he acknowledges no less gratefully.

	\section{Preliminaries}
	
	\subsection{Inner Model Theory}
	
	This article will make a crucial use of the notions of \df{premice} and \df{mice}. The definitions of these objects are notoriously technical and demanding, hence we will not give them here, as customary in the field. Standard references for them are \cite{Steel_outline} and \cite{FSIT} (note that the latter source, albeit more detailed, contains a significant error pointed out and corrected in \cite{Schindler_Zeman}), another nice overview can be found in the introduction of \cite{Mueller_thesis}, see also \cite{MSW20}. Roughly, they are models containing large cardinals (most importantly for us, \df{Woodin cardinals}) and satisfying some $L$-like properties. We will explicitly say what we mean by $L$-like in the subsequent sections of this paper.
	
	Other classic notions to which we will refer here are dependent choice for reals, $\DC_{\R}$ and the axiom of determinacy for the projective classes $\bm{\Pi^1_n}$-$\Det$. Crucially for our purposes, certain suitable large cardinal assumptions, roughly the existence of $n$ many Woodin cardinals, guarantee that the model satisfies determinacy for the sets in certain levels of the projective hierarchy, specifically $\bm{\Pi^1_{n+1}}$-sets. Again, these are very well studied and classical notions, to be found, for instance in  \cite{Schindler_book}.
	
	Even though we almost completely omit the presentation of mice, we will mention some of the core notions, which play a key role in this paper. Their precise description can be found in the cited sources (note, however, that some of the terminology below is standard, but not yet established in either \cite{FSIT} or \cite{Steel_outline}). 
	
	If $V$ contains enough large cardinals, then we can build a canonical inner model $L[\vec{E}]$ via a \df{fully backgrounded construction}. Roughly, we build a constructible universe with a sequence of predicates and whenever we see an extender that could fit on a sequence of the current model, we add a predicate denoting it to the sequence and we replace the sequence built thus far with its smallest ``Skolem hull'' (it is  somewhat technical to describe what we really take). If the universe $V$ contains $n$ Woodin cardinals, then $L[\vec{E}]$ also does. Besides that, it satisfies a number of $L$-like properties. Importantly for us, whenever $V[G]$ is a forcing extension by a forcing whose size is below the least measurable cardinal, $L[\vec{E}]^{V[G]} = L[\vec{E}]^V$. The construction is unique in the sense that at each stage $\alpha$, there is the unique way to extend the $\alpha$-th model according to the definition. Like in the case of G\"odel's $L$, we can build a similar model over extra sets obtaining the model $L[\vec{E}](A)$ (which will have analogous properties if the set $A$ in question is small enough). 
	
	Another key notion which we will use are the models $M_n^{\#}(A)$. They are the smallest mice containing $A$ and $n$ Woodin cardinals. In this paper, whenever we write ``$M_n^{\#}(A)$ exists'' we mean that it exists and is $\omega_1$-iterable. These models are canonical in the sense that, if it exists, $M_n^{\#}(A)$ as defined in $V$ is the same as $M_n^{\#}$ as defined in $L[\vec{E}](A)$ (also preserving $\omega_1$-iterability). On the other hand, the existence of the models $M_n^{\#}(x)$ for $x \in \R$ (with the right amount of iterability) is what is actually needed for projective determinacy to hold. 
	
	Suppose that $M$ is a premouse, let $\mathscr{T}$ be an \df{iteration tree} on $M$ of limit length (like with mice and premice, this is another notion whose definition we skip) and let $b$ be a branch through that tree. We define \df{$Q$-structures} of two related sorts: for a tree, $Q(\mathscr{T})$ or for a branch $Q(b, \mathscr{T})$. The first one is the largest extension $\mathscr{Q} \unrhd M(\mathscr{T})$ in which $\delta(\mathscr{T})$ is not definably Woodin (either because, we can find a definable counterexample to $\delta(\mathscr{T})$ being Woodin or the model projects below $\delta(\mathscr{T})$) and which is $\omega_1$-iterable above $\delta(\mathscr{T})$.
	
	The latter is defined similarly, but this is the largest model of the form $M^{\mathscr{T}}_b \mid \gamma$, the limit model of $\mathscr{T}$ along the branch $b$, restricted to the largest ordinal $\gamma$ such that $\delta(\mathscr{T})$ is not Woodin in $M_b^{\mathscr{T}} \mid \gamma$. Crucially, we do not require that the model is $\omega_1$-iterable above $\delta(\mathscr{T})$. In particular, the definition of $Q(b,\mathscr{T})$ has complexity $\Delta_0$. See \cite{Mueller_thesis}, Definition 2.2.1 for the precise formulation.
	
	By a \df{$Q$-structure strategy} for the iteration game  on a premouse $M$, we mean the (possibly partial) strategy $\Sigma$ such that for any tree $\mathscr{T}$ of limit length, $\Sigma(\mathscr{T})$ is the unique branch $b$ of $\mathscr{T}$ for which $Q(b,\mathscr{T}) = Q(\mathscr{T})$. One can show that this in fact gives a unique branch and that the model obtained as a direct limit along this branch is well-founded.
	
	\subsection{Symmetric models}

	In this paper, we will make crucial use of symmetric extensions. Let $M$ be any transitive model of (a fragment of) $\ZFC$ and let $M[G]$ be its forcing extension, where $G$ is generic for a poset $\P$. Recall that a \df{symmetric extension} of $M$ is a model
	\begin{displaymath}
		M \subseteq M^* \subseteq M[G]
	\end{displaymath} 
	such that for some normal subgroup $F \unlhd \Aut(\P)$ of the automorphism group of $\P$, $M^*$ is the set of $x \in M[G]$ such that $x$ has a name $\dot{x}$ which is invariant under the natural action of $F$ on $\P$-names. It is a classical fact that such an $M^*$ is a model of $\ZF$. 
	
	We will extensively use the following simple lemma. The proof is very straightforward, but we include it nevertheless, since we were not able to find an easily accessible reference.
	
	\begin{lemma} \label{lem_definable_subsets_symmetric_ext}
		Let $M^* \subset M[G]$ be a symmetric extension of $M$. Suppose that $X,a \in M^*$ and let $Y$ be a subset of $X$ definable in $M[G]$ with the parameter $a$. Then $Y \in M^*$. 
	\end{lemma}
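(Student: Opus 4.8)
The plan is to exhibit an explicit $\P$-name for $Y$ that is fixed by every element of $F$; by the definition of $M^*$ this immediately places $Y$ in $M^*$. Since $X, a \in M^*$, fix $F$-invariant names $\dot{X}$ and $\dot{a}$ with $\dot{X}^G = X$ and $\dot{a}^G = a$, and let $\varphi$ be a formula such that $Y = \set{z \in X}{M[G] \models \varphi(z, a)}$.

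First I would set
\begin{displaymath}
	\dot{Y} = \set{(\dot{z}, p)}{\dot{z} \in \dom(\dot{X}),\ p \in \P,\ p \Vdash \dot{z} \in \dot{X} \wedge \varphi(\dot{z}, \dot{a})},
\end{displaymath}
which is a set already in $M$ by Separation, since $\dom(\dot{X})$, the poset $\P$, and the forcing relation are all available there. A routine application of the truth lemma shows $\dot{Y}^G = Y$: if $z \in Y$ then $z = \dot{z}^G$ for some $(\dot{z}, q) \in \dot{X}$ with $q \in G$, and by the forcing theorem some condition in $G$ forces the defining conjunction, so $z \in \dot{Y}^G$; conversely any element of $\dot{Y}^G$ lies in $X$ and satisfies $\varphi(\cdot, a)$ in $M[G]$, hence belongs to $Y$.

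The crux of the argument, and the only place where the invariance of $\dot{X}$ and $\dot{a}$ is used, is to check $\pi \dot{Y} = \dot{Y}$ for every $\pi \in F$. Here I would invoke the symmetry lemma for forcing, namely that $p \Vdash \psi(\dot{z}_1, \dots, \dot{z}_k)$ if and only if $\pi p \Vdash \psi(\pi \dot{z}_1, \dots, \pi \dot{z}_k)$. Applied to a pair $(\dot{z}, p) \in \dot{Y}$, together with $\pi \dot{X} = \dot{X}$ and $\pi \dot{a} = \dot{a}$, this yields $\pi p \Vdash \pi \dot{z} \in \dot{X} \wedge \varphi(\pi \dot{z}, \dot{a})$; and since $\dom(\pi \dot{X}) = \pi[\dom(\dot{X})] = \dom(\dot{X})$, the pair $(\pi \dot{z}, \pi p)$ again lies in $\dot{Y}$. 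This gives $\pi \dot{Y} \subseteq \dot{Y}$, and running the same computation with $\pi^{-1}$, available because $F$ is a group, yields the reverse inclusion. Thus $\dot{Y}$ is $F$-invariant and $Y = \dot{Y}^G \in M^*$.

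I do not anticipate a serious obstacle, as the proof is essentially bookkeeping with the forcing theorem and the symmetry lemma. The one point deserving care is to confirm that restricting the first coordinates of $\dot{Y}$ to $\dom(\dot{X})$ loses no element of $Y$; this is fine because every element of $X = \dot{X}^G$ is realized by a name in $\dom(\dot{X})$, so no element of $Y \subseteq X$ is missed. It is also worth noting that the argument uses only that $F$ is closed under inverses: the normality of $F$ in $\Aut(\P)$, needed to guarantee that $M^*$ is a model of $\ZF$, plays no role in this particular lemma. If one reads \emph{invariant} as \emph{hereditarily invariant}, the same $\dot{Y}$ works, since its subnames are drawn from $\dom(\dot{X})$ and therefore inherit the hereditary property from $\dot{X}$.
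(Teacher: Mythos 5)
Your proof is correct and follows the same overall strategy as the paper's: build an explicit name for $Y$ out of the forcing relation and verify its invariance via the symmetry lemma. Two details differ, though, and both work in your favor. First, your name forces the conjunction $\dot{z} \in \dot{X} \wedge \varphi(\dot{z},\dot{a})$, whereas the paper's name $\sigma$ only requires $\dot{x} \in \dom(\dot{X})$ (witnessed by some condition $q$ that need not lie in $G$) and forces $\varphi(\dot{x},\dot{a})$ alone. As written, the paper's verification that $\sigma^G \subseteq Y$ is incomplete: from $p \Vdash \varphi(\dot{x},\dot{a})$ with $p \in G$ one gets $M[G] \models \varphi(\dot{x}^G,a)$ but not $\dot{x}^G \in X$, and a name in $\dom(\dot{X})$ can perfectly well evaluate to something outside $X$; your inclusion of $\dot{z} \in \dot{X}$ in the forced formula closes exactly this gap. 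Second, your invariance computation uses only the setwise invariance $\pi\dot{X} = \dot{X}$ (hence $\pi[\dom(\dot{X})] = \dom(\dot{X})$), while the paper's argument relies on the stronger reading of hereditary invariance under which every name occurring in $\dot{X}$ is fixed pointwise by $F$; your hypothesis is the standard one and strictly weaker, so your argument applies more generally. Your closing observations --- that only closure of $F$ under inverses is needed (normality playing no role here), and that $\dot{Y}$ is hereditarily symmetric because its domain sits inside $\dom(\dot{X})$ --- are also correct, and they make explicit points the paper leaves implicit.
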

	
	\begin{proof}
		Let $M^*$ be a symmetric submodel of $M[G]$ induced by the subgroup $F \unlhd \Aut(\P)$. Let $X,a \in M^*$ and let
		\begin{displaymath}
			Y = \set{x \in X}{M[G] \models \phi(x,a)}.
		\end{displaymath}
		Let $\dot{X}$ be a name for $X$ hereditarily invariant under $F$. In particular, for any $\sigma = \tuple{y,p} \in \dot{X}$ and any $f \in F$, $fy = y$ (where $F$ acts on names in the natural way).	Now, consider the following name:
		\begin{displaymath}
			\sigma = \set{\tuple{\dot{x},p}}{\exists q \ \tuple{\dot{x},q} \in \dot{X} \wedge p \Vdash \phi(\dot{x},\dot{a})}.
		\end{displaymath} 
		The name $\sigma$ is clearly hereditarily symmetric. Indeed, if $f \in F$, then 
		\begin{displaymath}
			p \Vdash \phi(\dot{x},\dot{a})
		\end{displaymath}
		iff
		\begin{displaymath}
			f(p) \Vdash \phi(f \dot{x}, f\dot{a}),
		\end{displaymath}
		and by assumption this amounts to
		\begin{displaymath}
			f(p) \Vdash \phi( \dot{x}, \dot{a}),
		\end{displaymath}
		so $\tuple{\dot{x},p} \in \sigma$ iff $\tuple{\dot{x},f(p)} \in \sigma$.
		
		Now, we claim that $\sigma^G = Y$. Indeed, let $x \in \sigma^G$ and assume that $\tuple{\dot{x},p} \in \sigma$. Then 
		\begin{displaymath}
			p \Vdash \phi(\dot{x},\dot{a}),
		\end{displaymath} 
		so by definition $x \in Y$. 
		
		If, on the other hand, $x \in Y$, then there exists a symmetric name $\dot{x}$  for $x$ and $p \in G$ such that $p \Vdash \phi(\dot{x},\dot{a}).$ Hence, $\tuple{\dot{x},p} \in \sigma$ and $x \in \sigma^G$. 
		
	\end{proof}

	\section{Adapting Gitman--Friedman--Kanovei poset} \label{sec_adapting_gkf}
	
	Our work builds heavily on \cite{fgk}, which described a construction of a model in which $\Pi^1_2\text{-}\DC$ fails (which in turn, was building upon  previous work by Jensen and Abraham). The model was obtained as a symmetric extension:
	\begin{displaymath}
		L \subset N \subset L[G],
	\end{displaymath}
	where $L$ is the constructible universe and $G$ is a generic for a certain poset $\J$ constructed using specific properties of $L$. In $N$, there is a tree $T$ of reals which has no branch. This much can be obtained by much simpler forcing, not utilising the technology originally studied by Jensen. However, the following two properties hold:
	\begin{itemize}
		\item The tree is exactly the tree of generic reals in $L[G]$.
		\item Being generic for the poset $\J$ can be defined in a $\Pi^1_2$-manner. 	
	\end{itemize}
	The above facts use the exact manner in which our forcing $\J$ is constructed, which in turn invokes specific properties of $L$. However, even though the construction of $\J$ as described in their paper in many places uses the properties of the constructible universe $L$, it can be adapted to other models of set theory, provided they satisfy enough $L$-like properties. We will now isolate specific assumptions we are making about the ground model and give an outline of the construction of $\J$ indicating which parts may be treated as a blackbox. Suppose $M$ is a model of $\ZFC^-$ such that:
	\begin{itemize}
		\item There exists a $\Diamond$-sequence definable via a formula $D$.
		\item There exists a well-ordering of the reals definable via a formula $W$. 
		\item There exists a formula $K$ such that for any set of ordinals $x$ if $K(x)$ holds, then $x$ codes a well-founded model $\bar{M}$ of $\ZFC^-$ such that:
		\begin{itemize}
			\item  $D(\bar{M})$ is a $\Diamond$-sequence and an initial segment of the $\Diamond$-sequence defined by $D$ in $M$;
			\item likewise, $W(\bar{M})$ is a well-ordering of the reals of $\bar{M}$ which is an initial segment of the well-ordering defined by $W$ in $M$. 
		\end{itemize}
		\item The formula $K$ holds for a club of countable subsets of $\omega_1$.\footnote{Actually, in Section 5, we will arrange the situation slightly differently: there will be a formula $K_0$ which will compute the well-ordering and which will be used for the complexity calculations, and another formula $K(x)$ which will say ``every real in the model coded by $x$ is in the model satisfying $K_0(x)$'' and which will ostensibly hold for a club of models.}
	\end{itemize}
	
	If a model $M$ satisfies the above properties, let us call it \df{$L$-like}. Let us now describe the construction. 
	
	Working in $M$, we will construct a sequence of posets $\P_{\alpha}, \alpha<\omega_1$. Each of them will be a \df{perfect poset}, i.e., a subposet of the Sacks forcing satisfying the following conditions:
	\begin{itemize}
		\item It contains all trees $T_s : = \set{x \in \omega^{\omega}}{s \textnormal{ is a prefix of } x}$, where $s \in \omega^{<\omega}$. 
		\item It is closed under joins $T \wedge S$, where $T \wedge S$ is the maximal perfect tree in $T \cap S$, if such a tree exists, or an empty tree otherwise. 
		\item It is closed under unions.
	\end{itemize} 
	The construction itself uses objects which we will not define, but the definitions of which, crucially, do not depend on the ground universe $L$. Specifically, these notions are:
	\begin{itemize}
		\item A \df{tree iteration of perfect posets}, $\mathbb{P}(\vec{P}, T)$, where $\vec{P}$ is (some) $\omega$-iteration of perfect posets and $T$ is a tree of height $\omega$. (Definition 6.1 of \cite{fgk}).
		\item A \df{fusion poset} $\mathbb{Q}(\vec{P},T)$, where $\vec{P}$ and $T$ are as above. (Remarks after Definition 6.1 of \cite{fgk}).
		\item For any generic $H \subset \mathbb{Q}(\vec{P},T)$, a \df{seal poset} $\P^* = S(\P,H)$. (Remarks after Proposition 6.5 of \cite{fgk}).
	\end{itemize}
	
	In the tree iteration, the conditions are roughly finite subtrees of $T$ where the nodes are conditions come from $\vec{P}$, i.e., from the usual finite support length $\omega$ iterations of $\P$ and we require them to be arranged in a coherent manner. A condition $p$ in this forcing is stronger than a condition $q$, essentially, if the finite tree supporting the iteration is bigger and the conditions are pointwise stronger. 
	
	The fusion poset for Sacks forcing is a poset where conditions are pairs $(T,n)$ where $T$ is a perfect tree and $n$ is a natural number. If $(T,n) \leq (S,k)$, then $T \subseteq S, k \geq n$, and, crucially, we require that $T \res n = S \res n$. In that manner, we guarantee that our forcing adds a generic \emph{tree} rather than a generic real. This idea can be naturally carried over to iterations and to tree iterations, but the details are somewhat technical.  
	
	The name ``seal poset'' is introduced in this note and seems to have no previous established name. The seal poset takes a poset $\P$, its finite support length $\omega$ iteration $\vec{P}$, and a generic $H$ for the fusion poset $\mathbb{Q}(\vec{P},T)$. This generic adds a tree of perfect trees. We take these trees and generate, in a natural manner, a minimal perfect poset extending $\P$ and containing all of them. Crucially this construction depends only on a poset $\vec{P}$ and a generic $H$ for the fusion poset  $\mathbb{Q}(\vec{P},T)$. Neither the construction, nor the basic facts concerning it really make use of the global properties of the background universe.
	
	The name is justified by the following result:
	
	\begin{lemma} \label{lem_sealing_posets} \
		Let $\P$ be a perfect poset and let $\P^* = \mathbb{Q}(\P,\omega_1^{<\omega})$. 
		\begin{itemize}
			\item Let $\mathscr{A}$ be a maximal antichain for an $n$-fold iteration $\P_n$ of $\P$. Then it is a maximal antichain for the $n$-fold iteration $\P_n^*$ of $\P^*$.
			\item Let $\mathscr{A}$ be a maximal antichain for the tree iteration $\mathbb{P}(\mathbb{P},\omega_1^{<\omega})$. Then it is a maximal antichain for $\mathbb{P}(\mathbb{P}^*,\omega_1^{<\omega}).$
		\end{itemize}
	\end{lemma}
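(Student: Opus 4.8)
The plan is to reduce both items to a single \emph{sealing step} for the underlying perfect poset and then to lift that step through the (tree) iteration by induction. The one fact I would use throughout is that for perfect posets compatibility is absolute: two conditions $T,S$ are compatible exactly when $T\cap S$ contains a perfect subtree, and this is a combinatorial property of the trees $T,S$ alone, not of the ambient poset. In particular, since a perfect poset is closed under joins, the witness $T\wedge S$ lives in any perfect poset containing $T$ and $S$. Consequently, if two elements of $\mathscr{A}$ are incompatible in $\P$ they remain incompatible in $\P^*$, so the antichain property transfers for free; the entire content of the lemma therefore lies in preservation of predensity (maximality).

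For the single step I would show that every maximal antichain $\mathscr{A}$ of $\P$ is maximal in $\P^*$, where $\P^*$ is the least perfect poset extending $\P$ and containing the perfect trees labelling the tree $\mathbf{T}$ of perfect trees added by a generic $H$ for $\mathbb{Q}(\P,\omega_1^{<\omega})$. The key is a density fact inside the fusion poset: since $\mathscr{A}$ is predense in $\P$, for every $V\in\P$ there is $a\in\mathscr{A}$ with $V\wedge a\neq\emptyset$, and $V\wedge a\leq a$; hence refining a node-label below some element of $\mathscr{A}$ is dense. By genericity of $H$ the labels along each branch of $\mathbf{T}$ are thus cofinally refined below members of $\mathscr{A}$, so each fusion limit along a branch lies below a single $a\in\mathscr{A}$. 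As unions only enlarge conditions, it then suffices to see that these fusion limits are dense in $\P^*$, whereupon the set $\set{V^*\in\P^*}{\exists a\in\mathscr{A}\ V^*\leq a}$ is dense, which is precisely predensity of $\mathscr{A}$ in $\P^*$.

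To obtain the two displayed items I would iterate this. For the $n$-fold iteration I proceed by induction on $n$: decomposing $\P_n^*$ as $\P^*$ followed by a $\P^*$-name for $\P_{n-1}^*$, I push the first coordinate below a node-label using the density fact above, and then apply the inductive hypothesis below that coordinate, using that the tail is again an iteration of sealed posets. The tree iteration $\mathbb{P}(\P,\omega_1^{<\omega})$ is treated the same way, replacing the linear induction by an induction along the finite subtrees supporting the conditions and using that the seal construction acts node-by-node (Definition 6.1 and the remarks after Proposition 6.5 of \cite{fgk}).

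The main obstacle, already in the single sealing step, is the tension between the fusion requirement---which freezes the initial segments $U_t\res|t|$ of the node-labels, and hence the branching up to a fixed level---and the refinement below $\mathscr{A}$, which generally alters those initial segments upon passing to $V\wedge a$. Resolving this requires performing the refinements not at the current node but cofinally deep along each branch, where the fusion ordering still leaves room, and then verifying that the resulting fusion limits are genuinely dense in $\P^*$ rather than merely present. This is exactly where the specific combinatorics of the seal poset from \cite{fgk} enter, and the same care must be taken uniformly across all coordinates in the inductive step for the (tree) iteration.
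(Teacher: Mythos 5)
Your preliminary observation is right, and it settles half of the statement: compatibility of two perfect trees is witnessed by the join $T\wedge S$, which lies in every perfect poset containing $T$ and $S$, so $\mathscr{A}$ remains an antichain in the larger poset and only predensity is at stake. But the engine of your argument --- ``each fusion limit along a branch lies below a single $a\in\mathscr{A}$'' --- is not merely obstructed by the fusion/refinement tension you flag in your last paragraph; it is false for \emph{every} perfect tree and cannot be repaired by refining deeper along branches. If $U$ is any perfect tree and $m$ exceeds the level of its first splitting node, then $U$ has two incomparable nodes of length $m$, so $U$ lies below no member of the maximal antichain $\set{T_s}{|s|=m}$, which belongs to the ground model (indeed to every perfect poset). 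Hence no tree that the fusion generic could possibly produce has the property you want for all ground-model antichains simultaneously. What genericity for the fusion poset actually provides --- and what the argument around Proposition 6.5 of \cite{fgk} establishes by a density argument \emph{inside the fusion poset} --- is a local sealing property: for each maximal antichain $\mathscr{A}$ in the ground model there is a finite level $k$ such that every restriction $U_t$ of a generic tree to a node $t$ of level $k$ lies below some member of $\mathscr{A}$. Predensity of $\mathscr{A}$ in $\P^*$ follows by chasing this through the generators of $\P^*$: for instance, inside a condition of the form $(U_s)_t\wedge V$ one passes to a node of level $\geq k$ within the join. For the same reason, your auxiliary claim that the fusion limits are dense in $\P^*$ is also false: a generic tree need not be compatible with a given $V\in\P$ at all.

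The second gap is structural. Sealing for $\P_n$ and for the tree iteration cannot be recovered from the one-poset case by induction on $n$ or on finite supporting subtrees: a maximal antichain of $\P_n$ does not factor into ground-model antichains of the coordinates, and once you restrict the first coordinate below a node-label, the trace of $\mathscr{A}$ on the tail is a $\P^*$-name, not a ground-model maximal antichain of $\P_{n-1}$ to which any inductive hypothesis applies. This is precisely why the seal poset is built from a generic for the fusion poset of the \emph{whole tree iteration}, $\mathbb{Q}(\vec{P},\omega_1^{<\omega})$, rather than of $\P$ alone: the maximal antichains of all the finite iterations and of $\mathbb{P}(\P,\omega_1^{<\omega})$ must be sealed simultaneously, by fusion conditions that control finite tuples of nodes across distinct coordinates at once. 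For calibration: the paper itself gives no proof of this lemma --- it is imported from \cite{fgk}, the point of Section \ref{sec_adapting_gkf} being only that the \cite{fgk} proof nowhere uses special properties of $L$ --- and the \cite{fgk} argument is exactly this simultaneous multi-coordinate fusion density argument, not a single-step sealing lifted through the iterations.
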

	
	Let us now describe the actual construction: we start with a poset $\P_0$ which consists of all trees $T_s, s \in \omega^{< \omega}$. At the stage $\alpha$, we are given a perfect poset $\P_{\alpha}$. We let $\P_{\alpha+1} = \P_{\alpha}$ unless in the $\Diamond$-sequence $(D_{\xi})_{\xi < \omega_1}$ defined by $D$:
	\begin{itemize}
		\item  $K(D_{\alpha})$ holds, so $D_{\alpha}$ codes a well-founded $L$-like model $M_{\alpha}$ resembling $M$;
		\item $P_{\alpha} \in M_{\alpha}$.
		\item $\omega_1^{D_{\alpha}}= \alpha$.
	\end{itemize}
	In this case, we take $G_{\alpha}$ to be the $W$-least generic for the fusion tree poset $\mathbb{Q}(\vec{P},\omega_1^{<\omega})$ and we take $\P_{\alpha+1}$ to be the seal poset $S(\P_{\alpha},G_{\alpha})$.
	
	Finally, we will denote:
	\begin{displaymath}
		\J = \bigcup_{\alpha <\omega_1} \P_{\alpha}.
	\end{displaymath}
	The exact definition of $\J$ depends of course on the formulae $D,W,K$, so it really should be parametrized. However, we will suppress the explicit mention of the definitions. We obtain a series of results regarding $\J$ as in \cite{fgk}.
	
	\begin{lemma} \label{lem_J_has_ccc}
		The poset $\J$ and the poset $\mathbb{P}(\mathbb{J},\omega_1^{<\omega})$ have the ccc property.
	\end{lemma}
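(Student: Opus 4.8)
The plan is to show that every maximal antichain of each of the two posets is already sealed at some countable stage of the construction, and is therefore countable. Two facts drive the argument. First, each perfect poset $\P_\alpha$, and each of the tree-iteration approximations $\mathbb{P}(\P_\gamma,\gamma^{<\omega})$ with $\gamma<\omega_1$, is countable; this follows by induction on the construction, since $\P_0$ is countable, the seal operation sends countable posets to countable posets, and at a limit $\lambda<\omega_1$ we take the countable union $\P_\lambda=\bigcup_{\beta<\lambda}\P_\beta$. Second, by Lemma~\ref{lem_sealing_posets} (applied to the seal poset $\P_{\alpha+1}=S(\P_\alpha,G_\alpha)$) the sealing step carried out at an active stage preserves maximality of antichains, so that a maximal antichain of $\P_\alpha$ stays maximal in every $\P_\beta$ with $\beta\geq\alpha$, hence in $\J=\bigcup_{\beta<\omega_1}\P_\beta$; indeed maximality passes through inactive successor steps trivially (there $\P_{\beta+1}=\P_\beta$) and through limits because $\P_\lambda=\bigcup_{\beta<\lambda}\P_\beta$.

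First I would treat $\J$. Fix a maximal antichain $\mathscr{A}\subseteq\J$ and set
\begin{displaymath}
	C=\set{\alpha<\omega_1}{\mathscr{A}\cap\P_\alpha \textnormal{ is predense in } \P_\alpha}.
\end{displaymath}
Using that each $\P_\alpha$ is countable and that $\mathscr{A}$ is predense in $\J$, a routine closing-off argument shows that $C$ is closed and unbounded: for unboundedness one builds an increasing $\omega$-chain $\beta_0<\beta_1<\cdots$ in which $\P_{\beta_{n+1}}$ contains, for every $p\in\P_{\beta_n}$, an element of $\mathscr{A}$ compatible with $p$, and takes $\alpha=\sup_n\beta_n$; closure is immediate from $\P_\lambda=\bigcup_{\beta<\lambda}\P_\beta$. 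On the other hand, the design of the construction (the $\Diamond$-sequence $D$ together with the formula $K$, which holds on a club) guarantees that the set of \emph{active} stages $\alpha$, those at which $\P_\alpha\in M_\alpha$, $\omega_1^{M_\alpha}=\alpha$, and the seal $\P_{\alpha+1}=S(\P_\alpha,G_\alpha)$ is formed, is stationary. Choosing $\alpha\in C$ active, the set $\mathscr{A}\cap\P_\alpha$ is a maximal antichain of $\P_\alpha$ that is sealed at stage $\alpha$, so by the second fact above it is a maximal antichain of $\J$. Since $\mathscr{A}\cap\P_\alpha\subseteq\mathscr{A}$ and both are antichains, maximality forces $\mathscr{A}=\mathscr{A}\cap\P_\alpha$; as $\P_\alpha$ is countable, $\mathscr{A}$ is countable, and $\J$ has the ccc.

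The argument for $\mathbb{P}(\J,\omega_1^{<\omega})$ is identical in structure. Here one uses that a single condition is a finite tree carrying finitely many iteration conditions drawn from $\J$, hence bounded, so that $\mathbb{P}(\J,\omega_1^{<\omega})=\bigcup_{\gamma<\omega_1}\mathbb{P}(\P_\gamma,\gamma^{<\omega})$ is an increasing union of countable posets; the reflection club $C$ is formed exactly as above, and the second clause of Lemma~\ref{lem_sealing_posets}, the one phrased for tree iterations, supplies the propagation of maximality. I expect the only genuinely delicate point to be the verification that the active stages are stationary, that is, that on a club the $\Diamond$-guess $D_\alpha$ codes a model $M_\alpha$ which correctly reconstructs $\P_\alpha$, so that the seal is formed over the right poset; this is exactly where the $L$-like hypotheses on $M$ and the definability of the entire construction enter, and everything else reduces to bookkeeping around Lemma~\ref{lem_sealing_posets}.
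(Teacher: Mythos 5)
Your skeleton --- countability of each $\P_\alpha$, a club $C$ of stages where $\mathscr{A}$ reflects to a maximal antichain, sealing at a good stage, propagation upward --- is the right one, and it matches the Friedman--Gitman--Kanovei argument that the paper simply imports for this lemma (the paper gives no proof, citing \cite{fgk}). But there is a genuine gap at the heart of your argument: you apply Lemma~\ref{lem_sealing_posets} to the antichain $\mathscr{A}\cap\P_\alpha$ at an arbitrary active stage $\alpha\in C$. The sealing at stage $\alpha$ is performed with a filter $G_\alpha$ that is generic for the fusion poset only over the countable model $M_\alpha$ coded by $D_\alpha$ --- a filter generic over the whole ground universe cannot exist inside that universe, where the construction is carried out. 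Consequently the seal $S(\P_\alpha,G_\alpha)$ preserves maximality only of those antichains of $\P_\alpha$ that are \emph{elements of} $M_\alpha$: the covering of the new generic trees by finitely many conditions extending members of the antichain comes from a density argument over $M_\alpha$, which is available only for antichains that $M_\alpha$ can see. Your proof never arranges $\mathscr{A}\cap\P_\alpha\in M_\alpha$, so at a typical active $\alpha\in C$ the trees added at stage $\alpha$ may be incompatible with every element of $\mathscr{A}\cap\P_\alpha$, and maximality is destroyed rather than sealed. Hence stationarity of the active stages --- the point you flagged as the only delicate one --- is not sufficient and is not really the issue. This is exactly why Jensen-type constructions use a $\Diamond$-sequence rather than merely ``stationarily many sealing stages'': for the \emph{fixed} antichain $\mathscr{A}$, one uses $\Diamond$ to find stationarily many $\alpha$ at which $D_\alpha$ codes a model $M_\alpha$ (for instance, the transitive collapse of a countable elementary submodel of a large structure containing both the construction and $\mathscr{A}$) with $K(D_\alpha)$, $\omega_1^{M_\alpha}=\alpha$, and \emph{both} $\P_\alpha$ and $\mathscr{A}\cap\P_\alpha$ in $M_\alpha$; intersecting that stationary set with your club $C$ produces the stage where the seal genuinely applies.

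The same omission infects your propagation step. Correctly relativized, Lemma~\ref{lem_sealing_posets} preserves at a later active stage $\beta$ only those maximal antichains of $\P_\beta$ that lie in $M_\beta$; so to conclude that an antichain sealed at $\alpha$ stays maximal in every later $\P_\beta$ you need $\mathscr{A}\cap\P_\alpha\in M_\beta$ for every later active $\beta$. This is where the $L$-like coherence hypotheses do real work: because $D^{M_\beta}$ and $W^{M_\beta}$ are initial segments of $D$ and $W$, the model $M_\beta$ reconstructs the entire construction below $\beta$ --- in particular $D_\alpha$, hence $M_\alpha$, hence $\mathscr{A}\cap\P_\alpha$ --- and only then can the sealing lemma be invoked again at stage $\beta$. (Alternatively, one propagates the finite-covering property itself, which is upward absolute, as in \cite{fgk}.) Note that the face-value reading of Lemma~\ref{lem_sealing_posets} on which your argument leans cannot be the intended one: if sealing preserved \emph{all} maximal antichains of $\P_\alpha$, the $\Diamond$-sequence would be entirely superfluous and any bookkeeping whatsoever would yield a ccc forcing, which is not how these constructions work.
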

	
	The next lemma was a part of the proof of the main result in \cite{fgk}. Let us isolate it as a separate fact. 
	
	\begin{lemma} \label{lem_generic_is_locally_generic}
		Let $G$ be generic for $\mathbb{P}(\mathbb{J},\omega_1^{<\omega})$. Let $\bar{x} = \tuple{x_1, \ldots, x_n} \in M[G]$ be an $n$-tuple of reals. Then $\bar{x}$ is generic for the $n$-fold iterate $\J_n$ iff for any $\alpha < \omega_1$ it is generic for  $\mathbb{P}(\mathbb{P}_{\alpha},\omega_1^{<\omega})$.
	\end{lemma}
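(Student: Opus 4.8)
The plan is to reduce genericity to the combinatorics of maximal antichains and to exploit two structural features of the tower $\langle \P_\alpha : \alpha < \omega_1 \rangle$: that $\J = \bigcup_{\alpha < \omega_1} \P_\alpha$ is an increasing union, and that sealing (Lemma~\ref{lem_sealing_posets}) preserves maximality of antichains across successor stages. Recall that a tuple $\bar{x}$ is generic for a poset exactly when the filter it generates meets every maximal antichain of that poset lying in $M$; by Lemma~\ref{lem_J_has_ccc} every antichain relevant to us (for $\J_n$ and for the tree iterations in play) is countable, which is the feature that makes the whole argument go through.

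The first step is to show that $\bar{x}$ is generic for $\J_n$ if and only if $\bar{x}$ is generic for $(\P_\beta)_n$ for every $\beta < \omega_1$. For the easy direction, a maximal antichain $\mathscr{A}$ of $\J_n$ is countable, so each of its conditions uses only trees from some $\P_\beta$; taking the supremum of the countably many indices gives a single $\beta$ with $\mathscr{A} \subseteq (\P_\beta)_n$, and since $(\P_\beta)_n \subseteq \J_n$ the antichain $\mathscr{A}$ is a fortiori maximal in $(\P_\beta)_n$, whence $\bar{x}$ meets it. For the converse I would start from a maximal antichain of $(\P_\beta)_n$ and use the first clause of Lemma~\ref{lem_sealing_posets} to propagate its maximality across each sealing step $\P_\gamma \mapsto \P_{\gamma+1}$; at limit stages a countable antichain is already absorbed at an earlier level, so it remains maximal in $(\P_\gamma)_n$ for all $\gamma \geq \beta$ and therefore in $\J_n = \bigcup_\gamma (\P_\gamma)_n$, at which point genericity of $\bar{x}$ for $\J_n$ forces it to be met.

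It then remains to match the finite-iterate genericity with the tree-iteration genericity at each fixed stage, i.e.\ to see that $\bar{x}$ is generic for $(\P_\alpha)_n$ if and only if it is generic for $\mathbb{P}(\P_\alpha, \omega_1^{<\omega})$. This is the correspondence built into the construction of the tree iteration in \cite{fgk}: finite support iterations sit inside the tree iteration along nodes of the generic tree, the reals read off along a node of length $n$ form a $(\P_\alpha)_n$-generic tuple, and maximal antichains of the finite iterate lift canonically to the tree iteration and restrict back, here invoking the second clause of Lemma~\ref{lem_sealing_posets} as the tree-side analogue of the transfer used above. Chaining this per-stage equivalence with the reduction of the previous paragraph gives both directions of the lemma at once, since genericity for $\J_n$ is then equivalent to genericity for $(\P_\beta)_n$ for all $\beta$, which is equivalent to genericity for $\mathbb{P}(\P_\beta, \omega_1^{<\omega})$ for all $\beta$.

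I expect the main obstacle to be precisely this last matching step: keeping the bookkeeping straight between the finite support iterations $(\P_\alpha)_n$, out of which $\J_n$ is assembled, and the tree iterations $\mathbb{P}(\P_\alpha, \omega_1^{<\omega})$ on the right-hand side, and verifying that the antichain lifting and restriction genuinely respect the node and branch structure of the generic tree. Propagation of maximality through sealing is clean at successor stages thanks to Lemma~\ref{lem_sealing_posets}, but one must check carefully that the ccc really does confine every relevant maximal antichain to a level below $\omega_1$, so that no genuinely new maximal antichain can appear at the limit stage $\omega_1$ itself and spoil the localization.
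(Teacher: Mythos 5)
Your proposal is correct and takes essentially the same approach as the paper: the substantive direction is exactly the paper's argument --- by the ccc (Lemma~\ref{lem_J_has_ccc}) every maximal antichain is countable and hence already contained in some stage-$\alpha$ poset, where genericity is assumed --- while your propagation of antichain maximality through the sealing steps via Lemma~\ref{lem_sealing_posets} simply fills in the direction the paper dismisses as ``Easy.'' The finite-iteration versus tree-iteration matching that you flag as the main obstacle is glossed over by the paper as well (its statement concerns $\J_n$ while its proof fixes maximal antichains of $\mathbb{P}(\J,\omega_1^{<\omega})$), so your extra bookkeeping is a legitimate concern inherited from the source rather than a gap in your argument.
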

	
	\begin{proof}
		
		$(\Rightarrow)$. Easy.
		
		$(\Leftarrow)$. Fix any $\bar{x} \in M[G]$ and suppose that it is generic for any forcing $\mathbb{P}(\mathbb{P}_{\alpha},\omega_1^{<\omega})$. Fix an arbitrary maximal antichain $\mathscr{A} \subset \mathbb{P}(\J,\omega_1^{<\omega})$. By Lemma \ref{lem_J_has_ccc}, it is countable, so there exists a stage $\alpha < \omega$ such that $\mathscr{A} \subset \mathbb{P}(\P_{\alpha},\omega_1^{<\omega})$. By assumption, $\bar{x}$ meets this antichain. 
	\end{proof}
	
	\begin{lemma}[Kanovei--Lyubetsky Theorem for $\J$] \label{lem_kanovei_lyubetsy}
		Let $\P(\J,\omega_1^{<\omega})$ be the tree iteration poset, let $G \subset \P(\J,\omega_1^{<\omega})$ be a generic filter, and let $\set{x_s}{s \in \omega_1^{<\omega}}$ be the tree of reals naturally arising from the generic $G$. Then for any tuple $\bar{x} \in {(\R^{n})}^{M[G]}$ which is generic for $n$-iteration of $\J$, there exists a tuple $s \in \omega_1^{n}$ for which $\bar{x} = x_s$. 
	\end{lemma}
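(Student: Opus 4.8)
The plan is to locate $\bar x$ on the tree by combining an analysis of its name via the ccc with the sealing mechanism that is built into the construction of $\J$. First I would fix a $\mathbb{P}(\J, \omega_1^{<\omega})$-name $\dot{\bar x}$ and a condition $p \in G$ forcing that $\dot{\bar x}$ is an $n$-tuple generic for $\J_n$. Since by Lemma \ref{lem_J_has_ccc} the whole tree iteration is ccc, each coordinate of $\bar x$ is decided by countably many maximal antichains; gathering the nodes occurring in these antichains together with the support of $p$, I obtain a countable subtree $R \subseteq \omega_1^{<\omega}$ such that $\dot{\bar x}$ may be taken to be a name in the restricted forcing $\mathbb{P}(\J, R)$, and in particular $\bar x \in M[G \res R]$.

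Next I would invoke Lemma \ref{lem_generic_is_locally_generic} to replace genericity for $\J_n$ by the more tractable condition that $\bar x$ is generic for $\mathbb{P}(\P_{\alpha}, \omega_1^{<\omega})$ at every stage $\alpha < \omega_1$; this reduction lets me work one stage at a time and bring the sealing into play. Using the $\Diamond$-sequence defined by $D$, exactly as the construction of $\J$ does, I would find a single stage $\alpha < \omega_1$ at which $D_{\alpha}$ codes a countable $L$-like model $M_{\alpha}$ that sees $\P_{\alpha}$, the subtree $R$, the name $\dot{\bar x}$, and $p$, and at which the construction therefore performs a sealing, setting $\P_{\alpha+1} = S(\P_{\alpha}, G_{\alpha})$. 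This is the standard guessing use of $\Diamond$: it guarantees that the construction reacts to the potential generic coded by $\dot{\bar x}$.

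The heart of the matter is then to show that this sealing forces $\bar x$ onto a branch of the tree. The generic $G_{\alpha}$ for $\mathbb{Q}(\vec P, \omega_1^{<\omega})$ adds a tree of perfect trees which is absorbed into $\P_{\alpha+1} = S(\P_{\alpha}, G_{\alpha})$, and by the antichain-preservation of Lemma \ref{lem_sealing_posets} the maximal antichains of the relevant iterations of $\P_{\alpha}$ lying in $M_{\alpha}$ remain maximal once we pass to the sealed poset. Since $\bar x$ is generic for $\mathbb{P}(\P_{\alpha}, \omega_1^{<\omega})$ by the reduction above, it must meet these sealed antichains, and because $\dot{\bar x}$ and $R$ are seen by $M_{\alpha}$, meeting them decides, node by node, which of the sealed trees $\bar x$ follows. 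If no condition forced $\bar x = x_s$ for a level-$n$ node $s$, then below $p$ there would be a sealed antichain exhibiting a dense set avoided by $\bar x$, contradicting the genericity supplied by Lemma \ref{lem_generic_is_locally_generic}; hence $\bar x = x_s$ for some $s \in \omega_1^{n}$.

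I expect the main obstacle to be precisely this last pinning-down step: proving that the sealed trees force \emph{literal} equality $\bar x = x_s$ rather than mere generic equivalence to a branch. This requires unwinding the fusion construction of $\mathbb{Q}(\vec P, T)$ and of the seal poset $S(\P, H)$ and running the fusion argument of \cite{fgk} to read $\dot{\bar x}$ continuously off finitely many tree reals and then show that the resulting continuous image must be a projection onto a single branch. The one thing to check carefully, and the reason the result transfers to the present setting at all, is that this fusion and sealing combinatorics uses only the $L$-like properties of $M$, namely the definable $\Diamond$-sequence $D$, the definable well-order $W$ of the reals, and the condensation-type guessing supplied by $K$, and never the full structure of $L$ itself.
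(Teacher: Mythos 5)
The paper gives no proof of this lemma at all: it is one of the results imported wholesale from \cite{fgk} (``We obtain a series of results regarding $\J$ as in \cite{fgk}''), the only new content being the observation, made in Section \ref{sec_adapting_gkf}, that the FGK argument uses nothing beyond the $L$-like properties (definable $\Diamond$-sequence, definable well-order, and the condensation-type formula $K$) of the ground model. Your sketch --- ccc capture of the name, $\Diamond$-guessing of a sealing stage whose model contains the name, antichain preservation via Lemma \ref{lem_sealing_posets}, and deferral of the final fusion/pinning-down argument to \cite{fgk} --- is precisely that argument, with the deferral occurring at the same point where the paper itself treats the machinery as a black box, so it matches the paper's approach.
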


	\section{A model with failures of $\DC$ and $\Pi^1_1$-determinacy}
	\label{sec_model_with_measurable_and_a_failure_of_DCR}
	
	In this subsection, we show that the construction of the symmetric model from \cite{fgk} can be carried over to the $L[U]$ setting, resulting in a model with definable failures of $\DC$. Here, there are some differences between our construction and the original one, since dealing with models with a measurable raises the complexity of the defined tree. 
	
	Let us introduce some notation. Let $F \leq \Aut(\omega_1^{< \omega})$ be a subgroup consisting of the order-automorphisms of $\omega_1^{<\omega}$ as a tree which pointwise fix some countable subtree $T \subset \omega_1^{\omega}$ with no infinite branch. This subgroup acts naturally on the poset $\P(\J,\omega_1^{<\omega})$ by permuting the names on the coordinates. 
	
	Let $G \subset \P(\J,\omega_1^{< \omega})$ be a generic filter and let $N$ be a \df{symmetric model} induced by the permutation subgroup $F$, i.e.:
	\begin{displaymath}
		L[U] \subset N \subset L[U][G]
	\end{displaymath}
	such that 
	\begin{displaymath}
		N = \set{x \in L[U][G]}{x = \dot{x}^G \textnormal{, where $\dot{x}$ is hereditarily invariant under }F}.
	\end{displaymath}
	We want to show that in $N$, there exists a definable tree of reals which has no branch. 
	
	Let $T$ be the tree of $\J_n$-generic tuples of reals added by the forcing $\P(\J,\omega_1^{<\omega})$. Notice that all such tuples from $L[U][G]$ are in $N$, since a condition from $\P(\J,\omega_1^{<\omega})$ is supported by a finite tree $X$, so it is fixed by any automorphisms fixing a countable tree with no branch extending $X$. Since $\dot{T}$ contains all such conditions, it is clearly fixed setwise by $F$ (and, in fact, by $\Aut(\omega_1^{<\omega}))$. It can be also shown that no branch through $\omega_1^{<\omega}$ can have a name fixed by all automorphisms fixing a countable tree $S$ with no branches --- essentially because we can change that branch at nodes which are outside $S$ without changing $S$. This locality property was established in Lemma 9.3 of \cite{fgk}. Finally, we have to check the complexity of the branch defined in this way. 
	
	\begin{proposition} \label{prop_complexity_of_tree_Pi13}
		Let $N,T$ be defined as above. Then $T$ can be defined as a $\Pi^1_2$ subset of the reals.
	\end{proposition}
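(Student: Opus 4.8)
The plan is to characterize membership in $T$ by a genericity predicate and then bound the complexity of that predicate by quantifying over countable approximating models. By Lemma~\ref{lem_kanovei_lyubetsy}, a tuple $\bar x \in (\R^n)^{N}$ lies in $T$ precisely when it is generic for the $n$-fold iterate $\J_n$ over $L[U]$, so it suffices to analyse the predicate ``$\bar x$ is $\J_n$-generic.'' First I would invoke Lemma~\ref{lem_generic_is_locally_generic} to reduce genericity for $\J_n$ to genericity for the stage posets: $\bar x$ is $\J_n$-generic iff it is generic for $\mathbb{P}(\mathbb{P}_\alpha,\omega_1^{<\omega})$ for every $\alpha<\omega_1$. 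Since each $\mathbb{P}_\alpha$ is countable and, by construction, is computed inside the countable $L[U]$-like model $M_\alpha$ coded by $D_\alpha$, this per-stage genericity is exactly what a projective reflection over countable models can express.

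Concretely, I would establish the equivalence
\[
\bar x \in T \iff \forall y \, \bigl( \Phi(y) \rightarrow \Theta(y,\bar x) \bigr),
\]
where $\Phi(y)$ asserts that $y$ codes a \emph{correct} countable $L[U]$-like model $\bar M$ --- meaning $\bar M$ is well-founded, satisfies the recognition formula $K$ (respectively $K_0$, in the sense of the footnote), and computes an initial segment of the $\Diamond$-sequence and of the well-ordering $W$ that genuinely agrees with the ones defined in $L[U]$ --- and $\Theta(y,\bar x)$ asserts that $\bar x$ is $\J_n^{\bar M}$-generic over $\bar M$. The forward direction is immediate, since a generic tuple is generic over any correct inner approximation of the construction. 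For the backward direction I would use the hypothesis that $K$ holds for a club of countable submodels: given any maximal antichain $\mathscr A$ of $\mathbb{P}(\J,\omega_1^{<\omega})$, which is countable by Lemma~\ref{lem_J_has_ccc}, a correct $\bar M$ containing $\mathscr A$ together with a sufficiently long initial segment of $\langle \mathbb{P}_\alpha \rangle$ will see $\mathscr A$ as a maximal antichain of its own iteration, so genericity over $\bar M$ forces $\bar x$ to meet $\mathscr A$. The sealing properties of Lemma~\ref{lem_sealing_posets} are what guarantee that antichains computed inside $\bar M$ remain maximal in the true iteration, which is precisely what makes this reflection correct.

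It then remains to count complexity. The genericity clause $\Theta(y,\bar x)$ is arithmetic in $y$ and $\bar x$, as it only asks that $\bar x$ meet every dense subset of a poset coded in the countable model $\bar M$; likewise, checking that $\bar M$ models $\ZFC^-$ and satisfies $K_0$ is arithmetic in $y$, and well-foundedness of $\bar M$ is $\Pi^1_1$ in $y$. Hence the whole statement has the shape $\forall y\,(\Phi(y)\rightarrow\Theta(y,\bar x))$ with $\Theta$ arithmetic, and the final projective level is governed entirely by the complexity of the correctness predicate $\Phi$: if $\Phi$ is $\Sigma^1_2$ then $\Phi\rightarrow\Theta$ is $\Pi^1_2$, and the universal real quantifier preserves $\Pi^1_2$, giving the claimed bound (possibly relative to a real parameter coding the recognition data).

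The main obstacle --- and exactly the place where the measurable cardinal costs us, in contrast to the purely constructible setting of \cite{fgk} --- is verifying correctness, namely that the internally computed $\Diamond$-sequence and well-ordering of $\bar M$ really are initial segments of the true ones in $L[U]$. For $L$-like models this follows from well-foundedness alone, but an $L[U]$-like model can be well-founded and still incorrect unless it is (linearly) iterable, so I would add to $\Phi$ the clause that $\bar M$ is $\omega_1$-iterable. The delicate point is to check that this clause does not push $\Phi$ above $\Sigma^1_2$. Here I would exploit that these models carry a single measure: the iteration is linear, the correct thread through limit stages is unique, and so iterability reduces to well-foundedness of the linear iterates, which can be expressed by a $\Pi^1_1$ (and a fortiori $\Sigma^1_2$) certificate. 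The $\Delta_0$ definability of the relevant $Q$-structures noted in the preliminaries is the tool that keeps the identification of the correct branch low in the hierarchy, and confirming this complexity estimate --- rather than the reflection argument itself --- is where I expect the real work to lie.
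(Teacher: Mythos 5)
Your proposal is correct and follows essentially the same route as the paper's proof: reduce membership in $T$ to $\J_n$-genericity via Lemma~\ref{lem_kanovei_lyubetsy}, then express genericity as a universal statement over reals coding countable well-founded models of $V=L[U]$, with the crucial correctness clause --- well-foundedness of all iterates by the measure, which stays $\Pi^1_1$ precisely because the iteration is linear --- placed in the hypothesis of the implication, so that the whole formula has the shape $\forall(\Sigma^1_1 \vee \Sigma^1_1)$ and is $\Pi^1_2$. The only superfluous element is your closing appeal to $Q$-structures, which play no role in the $L[U]$ setting (linear iterations by a single measure have no branch to identify); your preceding justification via uniqueness of the linear thread and a $\Pi^1_1$ well-foundedness certificate is complete and is exactly what the paper's formula encodes.
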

	\begin{proof}
		By Lemma \ref{lem_kanovei_lyubetsy}, the tuples of reals in the tree $T$ are exactly the tuples which are $\J_n$-generic over $L[U]$, so it is enough to show that the latter condition can be expressed in $\Pi^1_2$ manner. 
		
		Consider the following statement $\phi(x)$:
		\begin{center}
			''For all well-founded models $M$ satisfying $V = L[U]$, if all countable iterates of $M$ via $U$ are well founded, then for any sequence of sets $(M_{\beta})_{\beta < {\omega_1}^M}$, at any nontrivial stage of the construction, $x$ defines a $\P_{\beta}$- generic filter.''
		\end{center}
		This formula defines the tuples $x$ which are $\J_n$-generic over $L[U]$. Now it is enough to check its complexity. Notice that the formula has the form:
		\begin{multline*}
			\underbrace{\textnormal{for all $X$}}_{\forall} \Big( \underbrace{\textnormal{$X$ codes a well founded model whose all iterates are wellfounded }}_{\forall} \\
			\rightarrow \underbrace{\textnormal{there is a (unique) construction of $\J_n$ which makes $x$ generic}}_{\exists}\Big).
		\end{multline*}
		
		which is a equivalent to a formula of the form:
		\begin{displaymath}
			\forall(\exists  \vee \exists),
		\end{displaymath}
		so it has complexity $\Pi^1_2$, as required.

	\end{proof}

	\subsection{A measurable cardinal implies determinacy in the symmetric model} \label{ssec_pi11det_from_measurable}
	
	In the previous section, we have constructed a symmetric extension of $L[U]$ in which $\Pi^1_2$-$\DC$ fails. Now, we will check that (boldface) $\bm{\Pi}^1_1$-determinacy holds in the model. The arguments in this subsection could be replaced by slightly more abstract ones, along the lines of Section \ref{ssec_projective_det_in_small_forcing_extensions}. However, since the proof of $\bm{\Pi^1_1}$-determinacy is significantly easier than the proof of projective determinacy, it might be actually instructive to see this case spelled out in a more explicit manner. Therefore, we include the argument for the convenience of the reader.
	
	\begin{theorem} \label{det_in_symm_models}
		Suppose that $\kappa$ is a measurable cardinal. Let $N$ be a symmetric extension obtained by forcing $\P$ with a group $F$. Then $N$ satisfies $\mathbf{\Pi}^1_1$-determinacy
	\end{theorem}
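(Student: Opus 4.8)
The plan is to reduce $\bm{\Pi}^1_1$-determinacy in $N$ to the classical theorem, due to Martin, that the existence of a measurable cardinal implies $\bm{\Pi}^1_1$-determinacy (indeed that sharps exist). The key point is that $\bm{\Pi}^1_1$-determinacy is equivalent to the assertion that $x^\#$ exists for every real $x$, so it suffices to verify that $N$ believes that every one of its reals has a sharp. Since $N$ is a symmetric submodel sitting between $L[U]$ and its forcing extension $L[U][G]$, every real of $N$ is in fact a real of $L[U][G]$. Because our forcing $\P(\J,\omega_1^{<\omega})$ has the ccc by Lemma~\ref{lem_J_has_ccc}, and in particular has size far below the measurable cardinal $\kappa$, the measurable cardinal $\kappa$ is preserved into $L[U][G]$; hence in $L[U][G]$ every real has a sharp by Martin's theorem.

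First I would fix a real $x \in N$ and a $\bm{\Pi}^1_1$ game with payoff set coded (with a real parameter) by $x$; by absoluteness of the tree representation it is enough to exhibit a winning strategy for one of the players that lies in $N$. I would argue in $L[U][G]$ that, since $x^\#$ exists there, Martin's construction produces a winning strategy $\sigma$ for the $\bm{\Pi}^1_1$ game, and moreover $\sigma$ is canonically definable from $x^\#$ (equivalently from $x$ together with the measurable, via the homogeneity of the normal measure and the indiscernibles it generates). The crucial step is then to show $\sigma \in N$. For this I would invoke Lemma~\ref{lem_definable_subsets_symmetric_ext}: the strategy $\sigma$, viewed as a subset of $\omega^{<\omega}$ (or as a real), is \emph{definable in $L[U][G]$ from the parameter $x$}, which already lies in $N$. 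Since $\sigma \subseteq \omega^{<\omega}$ and $\omega^{<\omega}, x \in N$, Lemma~\ref{lem_definable_subsets_symmetric_ext} yields $\sigma \in N$ directly, with no further symmetry computation required.

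The main obstacle I anticipate is making precise the claim that the winning strategy is \emph{definable in $L[U][G]$ from $x$ alone}, rather than merely from $x$ together with some additional generic or non-symmetric data. Martin's strategy is built from $x^\#$, and $x^\#$ is itself $\Pi^1_2(x)$-definable once it exists; the existence of $x^\#$ follows from the measurable $\kappa$, which survives the forcing. So I would argue: in $L[U][G]$, the object $x^\#$ is the unique real satisfying the $\Pi^1_2$ characterization of sharps, hence definable from $x$; consequently Martin's strategy, being recursive in $x^\#$, is likewise definable from $x$. This is exactly the hypothesis needed to feed Lemma~\ref{lem_definable_subsets_symmetric_ext}.

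Finally, since $x$ was an arbitrary real of $N$ and the payoff parameter was arbitrary, this establishes that every $\bm{\Pi}^1_1$ game (with parameters in $N$) is determined in $N$, i.e.\ $N \models \bm{\Pi}^1_1\text{-}\Det$. I expect the write-up to be short precisely because the symmetric-model bookkeeping is entirely absorbed by Lemma~\ref{lem_definable_subsets_symmetric_ext}; the genuine content is the preservation of the measurable under a small (ccc) forcing together with the definability of $x^\#$, both of which are standard.
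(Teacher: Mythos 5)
Your proposal is correct in outline, but it takes a genuinely different route from the paper's own proof of Theorem \ref{det_in_symm_models}. The paper runs Martin's measurable-cardinal argument \emph{entirely inside} the symmetric model $N$: it notes that the order-representation of $\bm{\Pi}^1_1$ sets and Gale--Stewart determinacy for closed games on wellordered move sets are available in $\ZF$, and then the real work is showing that the ground-model measure $U$ generates an ultrafilter $\widehat{U}$ (and product measures $U_k$) in $N$ --- this uses smallness of the forcing and a good-names argument --- after which player II's auxiliary strategy is integrated over the measure inside $N$. You instead work upstairs in the full $\ZFC$ extension $L[U][G]$, where the measurable survives by L\'evy--Solovay, obtain a winning strategy there by Martin's theorem, and pull it down into $N$ via Lemma \ref{lem_definable_subsets_symmetric_ext}, finishing by Mostowski absoluteness of the $\bm{\Sigma}^1_1/\bm{\Pi}^1_1$ payoff. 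This is essentially the ``more abstract'' alternative that the paper itself flags at the start of Subsection \ref{ssec_pi11det_from_measurable} and then carries out for the general case in Section \ref{ssec_projective_det_in_small_forcing_extensions} (Proposition \ref{prop_projective_detereminacy_in_intermediate_models}, with $M_0^{\#}(x) = x^{\#}$); your version is shorter and generalizes to higher levels, while the paper's version is self-contained and exhibits concretely how the measure descends into the choiceless model. One caveat on your crux step: the claim that ``Martin's strategy is recursive in $x^{\#}$'' is essentially right but glossed, and for the $\bm{\Pi}^1_1$ player it is cleaner to avoid sharps altogether --- whichever player wins the auxiliary closed game on $\omega \times \kappa$ has a \emph{canonical} Gale--Stewart strategy (least move preserving the winning region, respectively rank-reducing), which is definable in $L[U][G]$ from $x$ and $\kappa$ alone, and the integrated strategy for II is then definable from $x$, $\kappa$, and $U$; since $x, \kappa, U$ all lie in $N$, this is exactly the hypothesis Lemma \ref{lem_definable_subsets_symmetric_ext} needs (the lemma is stated for one parameter, but a finite tuple of parameters from $N$ is the same thing). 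If you do insist on routing through ``$N$ is closed under sharps'' plus the $\ZF$ form of determinacy-from-mice (Theorem \ref{th_determinacy_from_mice} with $n = 0$), you also need $N$ to \emph{recognize} the pulled-down real as $x^{\#}$, which works because ``$y = x^{\#}$'' is $\Pi^1_2(x,y)$ and hence absolute between $N$ and $L[U][G]$ by Shoenfield; your write-up does not address this point, though your strategy-pulling argument happily does not require it.
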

	The proof of this theorem follows the original argument by Martin. We start with a combinatorial Lemma (the proof, in the context of $\ZFC$, can be found for instance in the proof of Martin's result, Theorem 31.1 in \cite{kanamori}):
	
	\begin{lemma}[Order representation for $\Pi^1_1$ sets] \label{lem_order representation}
		Let $A \subset \omega^{\omega}$. Suppose that there exists a tree $T$ such that $A \subseteq [T]$. Then $A$ is $\mathbf{\Pi}^1_1$ iff there exists a function $ T \ni t \mapsto <_t$ such that the following conditions are satisfied:
		\begin{itemize}
			\item For all $t \in T$, $<_t$ is a linear order on $\{0,1, \ldots, |t|\}.$
			\item For all $t \subset s$, $<_t \subset <_s$. 
			\item For all $x \in \omega^{\omega}$, $x \in A$ iff $<_x$ is wellfounded, where $<_x = \bigcup_{n \in \omega} <_{x \res n}$.
		\end{itemize} 
	\end{lemma}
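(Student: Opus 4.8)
The plan is to prove the equivalence in both directions, where the content lies almost entirely in producing the order representation from a $\mathbf{\Pi}^1_1$ set, the converse being a routine absoluteness computation. For the easy direction $(\Leftarrow)$, I would observe that if such an assignment $t \mapsto <_t$ exists, then membership $x \in A$ is equivalent to well-foundedness of the linear order $<_x$ on a subset of $\omega$, which is a $\mathbf{\Pi}^1_1$ condition: ``there is no infinite $<_x$-descending sequence'' is $\forall$ over reals of an arithmetical matrix, and the map $x \mapsto <_x$ is continuous (indeed recursive in the parameter coding $t \mapsto <_t$). Combined with $A \subseteq [T]$, which is closed hence $\mathbf{\Pi}^0_1$, this shows $A$ is $\mathbf{\Pi}^1_1$.

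For the main direction $(\Rightarrow)$, I would use the normal form for $\mathbf{\Pi}^1_1$ sets. Since $A$ is $\mathbf{\Pi}^1_1$, there is a recursive-in-a-parameter map assigning to each $x$ a tree $S_x \subseteq \omega^{<\omega}$ on $\omega$ (the tree of attempts to witness the matching $\mathbf{\Sigma}^1_1$ statement) such that $x \in A$ iff $S_x$ is well-founded, and moreover the finite approximation $S_x \cap \omega^{<n}$ depends only on $x \res n$. The key idea is then to linearize the Kleene--Brouwer ordering: for each $t \in T$ of length $n$, let $<_t$ be (an order-isomorphic copy on $\{0,1,\dots,|t|\}$ of) the Kleene--Brouwer order restricted to the finitely many nodes of $S_x$ visible from the initial segment $t$. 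Because the Kleene--Brouwer order of a tree is a linear order that is well-founded exactly when the tree is well-founded, and because it is built monotonically as longer initial segments reveal more nodes, the three required conditions follow: linearity at each stage, coherence under $t \subseteq s$, and the equivalence $x \in A \iff S_x$ well-founded $\iff <_x$ well-founded.

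The one genuine subtlety I would flag is the bookkeeping needed to make $<_t$ a linear order on exactly $\{0,1,\dots,|t|\}$ rather than on an abstract set of tree nodes, as the statement demands. This requires fixing, once and for all, a recursive enumeration of $\bigcup_x S_x$ (equivalently of $\omega^{<\omega}$) and transferring the Kleene--Brouwer order along it, taking care that after processing $t$ of length $n$ exactly $|t|+1$ many of the relevant nodes have been enumerated, so that the order lands on the prescribed finite set and the coherence $<_t \subseteq <_s$ is preserved verbatim rather than merely up to isomorphism. This indexing is the main obstacle, though it is a purely combinatorial one; once the enumeration is fixed the verification of the three bullet points is mechanical.

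I expect the hardest conceptual point to be ensuring that the assignment is genuinely monotone on the nose (so that $<_t \subseteq <_s$ as sets of pairs, not just up to rearrangement), since a naive relabeling at each stage would destroy coherence. The fix is to insist that the underlying enumeration of nodes never reshuffles already-assigned indices, so that extending $t$ to $s$ only appends new elements to the top or inserts them into the existing Kleene--Brouwer order without disturbing the comparisons among old elements. Since the referenced source \cite{kanamori} carries this out in the $\ZFC$ context and none of the argument uses choice beyond what is available, I would simply point to that treatment for the detailed construction, noting that it goes through verbatim here.
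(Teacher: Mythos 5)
Your proposal is correct and takes essentially the same route as the paper, which gives no independent argument but defers to the classical construction in Kanamori (Theorem 31.1): the normal form representing a $\mathbf{\Pi}^1_1$ set via wellfoundedness of trees $S_x$ whose levels depend only on finite initial segments of $x$, linearized by the Kleene--Brouwer order along a fixed recursive enumeration of $\omega^{<\omega}$ so that $<_t$ lands on $\{0,1,\ldots,|t|\}$ and comparisons among old indices are never revised. Your bookkeeping remarks (indices whose nodes lie outside the tree go into a wellordered tail, new indices are only inserted without disturbing old pairs) are precisely what makes the construction canonical and hence carry it out in $\ZF$, which is the point the paper emphasizes.
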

	The ordering from the above lemma is produced in a constructive manner and can be carried out in $\ZF$. Also, the following classic fact can be proved without using choice (a proof in $\ZFC$ can be found, for instance, as Proposition 27.1 in \cite{kanamori}):
	
	\begin{theorem}[Closed determinacy] \label{th_closed_determinacy_without_ac}
		Let $X$ be a wellfounded set. Let $A$ be a closed subset of $X^{\omega}$ in the product topology with $X$ discrete. Then the Gale-Stewart game on $A$ is determined. 
	\end{theorem}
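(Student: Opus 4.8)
The plan is to run the classical Gale--Stewart backward-induction argument, taking care that the only appeal to choice is a wellordering of the move set $X$, which is available since $X$ is wellfounded. First I would fix a wellordering $\prec$ of $X$ and represent the closed payoff set as $A = [T]$, where $T = \set{x \res n}{x \in A,\ n \in \omega}$ is the tree of initial segments of points of $A$; since $A$ is closed one checks $A = [T]$, so a run of the game lies in $A$ iff it is a branch of $T$. The decisive feature of the closed case is that, because $T$ is closed under initial segments, once a run leaves $T$ at a finite stage it can never return; leaving $A$ is therefore a permanent, finitely-witnessed event.

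Next I would define, by transfinite recursion on $\Ord$, the set $W \subseteq X^{<\omega}$ of positions from which the \emph{open player} (the opponent, aiming for the open complement $X^{\omega} \setminus A$) can force a win, together with a rank function $\mathrm{rk}\colon W \to \Ord$. A position $p \notin T$ is put into $W$ with rank $0$. For $p \in T$ at which the open player is to move, $p \in W$ iff some successor $p \frown \tuple{x}$ lies in $W$, and $\mathrm{rk}(p)$ is the least successor rank plus one; for $p \in T$ at which the \emph{closed player} (the one with payoff $A$) is to move, $p \in W$ iff every successor lies in $W$, and $\mathrm{rk}(p)$ is the supremum of the successor ranks plus one. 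In either case every successor witnessing membership has rank strictly below $\mathrm{rk}(p)$, and the recursion stabilizes simply because $X^{<\omega}$ is a set.

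Then I would split on whether the empty position lies in $W$. If $\emptyset \in W$, the open player's strategy is to play at each of its positions the $\prec$-least move passing to a successor in $W$ of strictly smaller rank (such a move exists by the defining clause, and at the closed player's positions in $W$ all successors already lie in $W$ with strictly smaller rank). Along any run consistent with this strategy the ranks form a strictly descending sequence of ordinals, which must terminate; termination can only happen at a rank-$0$ position, i.e. off $T$, so the run has left $A$ permanently and the open player wins. If $\emptyset \notin W$, the closed player plays the $\prec$-least move keeping the position outside $W$: at a closed-player position $p \notin W$ not all successors can be in $W$, so some successor avoids $W$, while at an open-player position $p \notin W$ no successor lies in $W$, so every successor avoids $W$. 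Hence the run never enters $W$, in particular never leaves $T$, so it is a branch of $T$ and lies in $A$, and the closed player wins. In both cases one player has a winning strategy, as required.

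I expect the only genuinely delicate point to be the elimination of choice. Realizing each strategy as an actual function requires selecting, uniformly over the set $X^{<\omega}$ of positions, a move out of a nonempty subset of $X$, which in general would demand $\AC$; but the fixed wellordering $\prec$ of $X$ lets us always take the $\prec$-least witness, so both strategies are outright definable and the whole argument is carried out in $\ZF$. This is exactly where the hypothesis that $X$ is wellfounded (hence wellorderable) is used. The remaining items --- the verification $A=[T]$, correct bookkeeping of move parity, and the observation that off-tree positions are permanently lost --- are routine.
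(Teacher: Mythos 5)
Your proof is correct, and it fleshes out exactly the argument the paper gestures at: the paper does not actually prove this theorem, but instead cites a $\ZFC$ proof (Kanamori, Proposition 27.1) and remarks that the point is to use the wellordering of $X$ to ``pick the smallest position in which we have not yet lost.'' Your write-up is the self-contained version, and it is worth noting that your formulation is the one that genuinely works in $\ZF$. If one reads ``not yet lost'' literally as ``the opponent has no winning strategy from this position,'' which is how the textbook propagation lemma runs, then the step ``if the opponent has a winning strategy from every successor of $p$, then she has one from $p$'' requires selecting one winning strategy for each of the $X$-many successors; a wellordering of $X$ alone does not provide that selection, since the set of strategies need not be wellorderable. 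Your inductive, rank-based definition of the winning region $W$ avoids quantifying over strategies altogether, and the wellordering of $X$ is then invoked only where it suffices: to pick $\prec$-least moves, both in the rank-descending strategy for the open player and in the $W$-avoiding strategy for the closed player. So your route is not merely a filled-in citation but the correct choice-free implementation of the sketch, and all the individual steps (the identity $A=[T]$, stabilization of the monotone recursion on the set $X^{<\omega}$, termination of strictly descending ordinal ranks, and the two case analyses) check out. One terminological caveat: under Foundation every set is wellfounded, so the hypothesis ``$X$ is wellfounded'' in the statement must be read as ``$X$ is wellorderable'' (which is what both you and the paper actually use); your parenthetical ``wellfounded, hence wellorderable'' is not literally a $\ZF$ implication, but it is the only sensible reading of the statement, so no harm is done.
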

	The above theorem probably cannot be prove in a choiceless context for general sets $X$. The point is that using the well order on $X$, in each move of the game, we can pick the smallest position in which we have not yet lost.

	Now, in order to prove $\bm{\Pi}^1_1$-determinacy, it is enough to reduce $\bm{\Pi}^1_1$-games to closed games. Let $\kappa$ be a measurable cardinal (in $V$). Let $A \subset \omega^{\omega}$ be a $\bm{\Pi}^1_1$ set. Let us consider the following game: Player I plays pairs of the form $\tuple{n_i,\xi_i} \in \omega \times \kappa$. Player II plays natural numbers $n_j$. I wins if the produced real $x$ is in $A$, and, additionally, for any $k \in \omega$, and any $i,j < \frac{k}{2}$ the following condition is satisfied:
	\begin{displaymath}
		n_i <_{x \res k} n_j \textnormal{ iff } \xi_j < \xi_j. 
	\end{displaymath}
	In other words, I has to produce a number in $A$, and additionally witness that $x \in A$ by embedding the linear order $<_x$ in $\kappa$. Since this is a closed game, it is determined. A winning strategy for I in this auxiliary game clearly produces a winning strategy for I in the original game. So it is enough to check that a winning strategy for II in this game yields a winning strategy (for II) in the original game. 
	
	By assumption, $\kappa$ is a measurable cardinal. Suppose that $U$ is a $\kappa$-complete ultrafilter on $\kappa$ in $V$. Then 
	\begin{displaymath}
		\widehat{U} = \set{X \in \pow(\kappa)^N}{\exists X_0 \in U \ X_0 \subset X}
	\end{displaymath}
	is an ultrafilter in $N$. Indeed: take any $Y \in \pow(\kappa)^N$. Fix a good name $\dot{Y}$ for $Y$. Let 
	\begin{displaymath}
		Z = \set{\gamma \in \kappa}{\exists p \in \P \ \tuple{\gamma,p} \in \dot{Y}}.
	\end{displaymath}
	If $Z \notin U$, then very few ordinals even have a chance to end up in $Z$ and 
	\begin{displaymath}
		\kappa \setminus Z \subseteq \kappa \setminus Y \in \widehat{U}.
	\end{displaymath}
	If, on the other hand, $Z \in U$, then since $\P$ is a small forcing, we see that there exists a single $p$ such that 
	\begin{displaymath}
		Y_0 = \set{\gamma \in \kappa}{\tuple{\gamma,p} \in \dot{Z}} \in U.
	\end{displaymath}
	Then 
	\begin{displaymath}
		Y_0 \subseteq Y \in \widehat{U}.
	\end{displaymath}
	This proves our claim. Now, let $U$ be a normal measure on $\kappa$ in $V$. Let
	\begin{displaymath}
		U_2 = \set{X \times X}{X \in U}.
	\end{displaymath}
	The set $U_2$ exists both in $V$ and in $N$ and it is known to be an ultrafilter on $[\kappa]^2$ in $V$ and, by the previous argument, also an ultrafilter in $N$. We define $U_n$ for $n \in \omega$ in a similar manner. 
	
	Now, suppose that $\sigma$ is a winning strategy for II in the auxiliary game. We will define a winning strategy $\tau$ for II in the original game in the following manner: by $\omega$-completeness of $U$, for any tuple $n_0, \ldots, n_{2k}$, there exists a unique $m$ such that:
	
	\begin{displaymath}
		A_m = \set{\tuple{\xi_0, \ldots, \xi_k}}{\tau(n_0,\xi_0,n_1,\ldots,n_{2k},\xi_k) = m} \in U_k.
	\end{displaymath}
	We let 
	\begin{displaymath}
		\tau(n_0,n_1,\ldots,n_{2k}) = m.
	\end{displaymath}
	That $\tau$ indeed defines a winning strategy can be checked as in the original proof (see Theorem 31.1 in \cite{kanamori}).
	
	\section{Definable failures of $\DC$ in models with Woodin cardinals}

	Now we will extend the results from the previous sections to the context of higher levels of projective determinacy. The overall flavour of the argument will be very similar to the previous one. However, there will be an extra layer of technical detail to take care of. 
	
	We will be working in the canonical inner model $M_n$. Again, in what follows we will not rely on a specific construction, but we will instead need to ensure that it is $L$-like in the sense of Section \ref{sec_adapting_gkf} and has large cardinal properties. The precise statements we need have been worked out in \cite{Steel_projectively_well_ordered}:
	
	\begin{itemize}
		\item If there exist at least $n$ Woodin cardinals, then the model $M_n$ exists.
		\item $M_n \models $ ``there exist $n$ distinct Woodin cardinals.''
		
		\item In $M_n$, there exists a $\Delta_{n+2}$-definable well-ordering of $\R$ defined with a formula $W(x)$.
		\item In $M_n$, there exists $\Diamond$-sequence $(S_{\alpha})_{\alpha < \omega_1}$ defined with a formula $D$.
		\item Let $K_0(x)$ mean: ``$x$ codes a well-founded model $\bar{M}$ which is an $n$-small, $\Pi_n$-iterable $\omega$-mouse.'' 
		\item Let $K(x)$ mean: ``$x$ codes a well founded model $\bar{N}$ such that every real of $\bar{N}$ is in the model coded by a real $y$ satifying $K_0(y)$.''\footnote{See Definitions 1.1, 1.3, 1.4, and 1.6 in \cite{Steel_projectively_well_ordered}. A mouse is $n$-small if it does not have $n$ Woodin cardinals; $\omega$-mouse is, roughly, a mouse projecting to $\omega$ and universal, $\Pi_n$-iterability is defined by the existence of winning strategies in certain more carefully defined iteration games; the point is that assuming $\Delta^1_{n+1}$-detereminacy (required only for even $n$), this condition is $\Pi^1_{n+1}$. In the next section, we will see that the suitable amount of determinacy holds in $M^*$. The models satisfying $K(x)$ form a club, since elementary submodels of large initial segments of $M_n$ satisfy it.} Then for any transitive set $\bar{M}$ coded by $x$ satisfying $K(x)$, $W^{\bar{M}}$ is an initial segment of $W^{M_n}$. 
		\item Likewise, if $K(x)$ holds and $x$ codes a model $\bar{M}$, then $D^{\bar{M}}$ is an initial segment of $D^{M_n}$.
	\end{itemize}
	
	We define the model $N$ in the same manner as in Section \ref{sec_model_with_measurable_and_a_failure_of_DCR}, as the symmetric model obtained by considering the names which are stabilised under the subgroup of automorphisms of ${\omega_1}^{<\omega}$ fixing a countable subtree with no branch. We can still show that the tree $T$ naturally obtained from the generic $G$ for $\J$ is an ${\omega_1}^{<\omega}$ tree with no branch. 
	
	Now, crucially, we have to prove two facts:
	
	\begin{itemize}
		\item If $M$ is a model with $n$ Woodin cardinals $\delta_1 < \ldots < \delta_n$, and $G$ is a generic for a small forcing $\P$, then in any model $N \models \ZF$ with $M \subset N \subset M[G]$, $\bm{\Pi}^1_{n+1}$-determinacy holds. 
		\item The tree $T$ obtained from a generic for $\J$ defined as above in $M_n$ is $\Pi^1_{n+2}$-definable.
	\end{itemize}
	
	The first of the above facts will be proved in Subsection \ref{ssec_projective_det_in_small_forcing_extensions}, so let us take care of the second issue.
	
	\begin{proposition} \label{prop_symmetric_models_easily_defines_the_tree}
		In the model $N$, the tree $T$ is $\Pi^1_{n+2}$-definable.
	\end{proposition}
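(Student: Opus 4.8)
The plan is to follow the template of Proposition~\ref{prop_complexity_of_tree_Pi13}, adapting the complexity bookkeeping to the $M_n$ setting. By Lemma~\ref{lem_kanovei_lyubetsy}, a tuple $\bar{x}$ lies in the tree $T$ exactly when it is $\J_n$-generic over $M_n$, so it suffices to express this genericity predicate by a $\Pi^1_{n+2}$ formula. By Lemma~\ref{lem_generic_is_locally_generic}, being $\J_n$-generic is equivalent to being $\P(\P_\alpha,\omega_1^{<\omega})$-generic for every $\alpha<\omega_1$, and by the ccc of $\J$ (Lemma~\ref{lem_J_has_ccc}) every maximal antichain is countable, hence appears at some countable stage of the construction. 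I would exploit this to rephrase genericity as a statement quantifying over the countable mice that correctly compute the construction of $\J$.

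Concretely, the formula $\phi(\bar{x})$ I would write reads: \emph{for every real $y$ with $K(y)$, coding a well-founded mouse $\bar{M}$, the tuple $\bar{x}$ is generic for every poset $\P_\alpha$ of the construction of $\J$ carried out internally to $\bar{M}$ using the formulae $D$, $W$, $K$}. The crucial input, exactly as in the $L$-like hypotheses listed for $M_n$, is that in any model satisfying $K$ the $\Diamond$-sequence $D$ and the well-ordering $W$ are computed as genuine initial segments of the corresponding objects of $M_n$; consequently the posets $\P_\alpha$ built inside $\bar{M}$ agree with the true $\P_\alpha$ for all $\alpha<\omega_1^{\bar{M}}$. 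Since the models satisfying $K$ form a club among the countable substructures, every maximal antichain of $\P(\J,\omega_1^{<\omega})$---being countable---is captured inside some such $\bar{M}$; thus $\bar{x}$ meets every antichain if and only if it is generic over each correct $\bar{M}$, which is precisely $\J_n$-genericity over $M_n$.

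For the complexity count, observe that once $y$ codes a well-founded model, the assertion $\psi(\bar{x},y)$ that $\bar{x}$ is generic over $\bar{M}$---i.e.\ that $\bar{x}$ meets every dense subset of each $\P_\alpha$ lying in $\bar{M}$---is arithmetic in $y$ and $\bar{x}$. By the results of Steel cited above, under the $\Delta^1_{n+1}$-determinacy available in the model (needed only for even $n$, and established in Subsection~\ref{ssec_projective_det_in_small_forcing_extensions}), the predicate $K(y)$ is $\Pi^1_{n+1}$, so $\neg K(y)$ is $\Sigma^1_{n+1}$. Thus $\phi(\bar{x})$ has the form
\begin{displaymath}
    \forall y\,\bigl(\neg K(y)\,\vee\,\psi(\bar{x},y)\bigr),
\end{displaymath}
whose matrix is a disjunction of a $\Sigma^1_{n+1}$ predicate with an arithmetic one, hence $\Sigma^1_{n+1}$; the leading universal real quantifier then yields $\Pi^1_{n+2}$, as required.

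The main obstacle I anticipate is not the final complexity arithmetic, which is routine once $K$ is known to be $\Pi^1_{n+1}$, but the verification that the mice satisfying $K$ compute the construction of $\J$ faithfully. This is exactly where $L$-likeness is essential: one must check that $n$-smallness together with $\Pi_n$-iterability forces $W$ and $D$ to restrict to initial segments inside $\bar{M}$, so that the internally built $\P_\alpha$ coincide with the external ones, and that the club of $K$-models genuinely captures every countable antichain of $\P(\J,\omega_1^{<\omega})$. A secondary delicate point is the appeal to determinacy when bounding the complexity of $K$; I would isolate the claim that the requisite $\Delta^1_{n+1}$-determinacy holds and defer its proof to the determinacy results of the next subsection.
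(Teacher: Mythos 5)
Your overall route is the paper's: reduce via Lemma~\ref{lem_kanovei_lyubetsy} and Lemma~\ref{lem_generic_is_locally_generic} to expressing local genericity, then count quantifiers in a formula that ranges over countable models computing the construction of $\J$. The flaw is in the complexity bookkeeping: you conflate the two predicates $K$ and $K_0$, which the paper deliberately keeps apart (this is exactly the point of the footnote in Section~\ref{sec_adapting_gkf}: $K_0$ ``will be used for the complexity calculations,'' while $K$ is the predicate that holds on a club). Your formula quantifies over models satisfying $K$, and your correctness argument uses precisely the two properties the paper attributes to $K$-models: that they form a club (hence capture every countable maximal antichain) and that $W$ and $D$ restrict to initial segments inside them. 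But the complexity bound you then invoke --- $\Pi^1_{n+1}$ under $\Delta^1_{n+1}$-determinacy, from Steel --- is the bound for $K_0$, i.e.\ for ``$y$ codes an $n$-small, $\Pi_n$-iterable $\omega$-mouse.'' By definition, $K(y)$ says that \emph{every real of the model coded by $y$ lies in some model coded by a real satisfying $K_0$}; this prefixes $K_0$ with an existential real quantifier (under a quantifier over the reals of $\bar{N}$), so $K$ is at best $\Sigma^1_{n+2}$, not $\Pi^1_{n+1}$. With $\neg K(y)$ only $\Pi^1_{n+2}$, your displayed computation fails as written. (It could be repaired: $\forall y\,(\Pi^1_{n+2}\vee\textnormal{arithmetic})$ is still syntactically $\Pi^1_{n+2}$ --- but only once one shows $K$ really is $\Sigma^1_{n+2}$, which requires reading ``every real of $\bar{N}$'' as a number quantifier and then absorbing $\forall^0$ into $\Sigma^1_{n+2}$, a closure property that needs countable choice for reals and is therefore delicate in a model of $\ZF$ in which $\DC$ fails.) Note also that the club property you use in the converse direction is genuinely false for $K_0$: $\omega$-mice project to $\omega$, so countable elementary submodels of initial segments of $M_n$ do not satisfy $K_0$; you cannot simply substitute $K_0$ for $K$ everywhere either.

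The paper's proof threads this needle by putting $K_0$, not $K$, into the defining formula: the matrix is then (the negation of a) $\Pi^1_{n+1}$ statement disjoined with a $\Pi^1_1$ antichain-meeting condition, hence $\Sigma^1_{n+1}$, and the leading universal real quantifier gives $\Pi^1_{n+2}$ with no appeal to choice-dependent closure properties. The club property of $K$ is used only in the background construction of $\J$ (to determine which stages are nontrivial), never inside the formula; the bridge between the two predicates is that a maximal antichain of $\mathbb{P}(\J,\omega_1^{<\omega})$ is a countable set of reals of $M_n$, and every such real lies in a $K_0$-mouse, so genericity for the constructions carried out inside $K_0$-models already suffices. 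If you replace $K$ by $K_0$ in your formula and reroute your antichain-capturing argument through this bridge rather than through the club of $K$-models, your proof becomes the paper's.
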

	
	\begin{proof}
		By Lemma \ref{lem_kanovei_lyubetsy}, it is enough to define in $N$ the set of tuples $\bar{x}$ which are $\J_n$-generic. By Lemma \ref{lem_generic_is_locally_generic}, we need to check whether $\bar{x}$ is generic for each poset $\mathbb{P}(\mathbb{P}_{\alpha},(\omega_1^{<\omega})^{M_{\alpha}})$, where $K(M_{\alpha})$ holds. 
		
		However, the latter condition can be expressed with a formula:
		\begin{multline*}
			\forall X, Y \ \Big(\underbrace{K_0(X)}_{\Pi^1_{n+1}} \wedge \underbrace{\textnormal{$Y$ is the result of the $\J$-construction in $X$ }}_{\Sigma^0_{\infty}} \\
			\rightarrow \underbrace{\bar{x} \textnormal{ meets all maximal antichains of }Y}_{\Pi^1_1} \Big). 
		\end{multline*} 
		Therefore, this is a formula of the form:
		\begin{displaymath}
			\forall Z (\Pi^1_{n+1} \rightarrow \Pi^1_1 ),
		\end{displaymath}
		so it is $\Pi^1_{n+2}$.
	\end{proof}
	
	\subsection{Projective determinacy in the small forcing extensions of $M_n$} \label{ssec_projective_det_in_small_forcing_extensions}


	In order to complete the proof, we still have to verify whether in the model $N$ constructed in the previous section, $\bm{\Pi}^1_{n+1}$-determinacy still holds. In this section, we will verify this fact. Unlike in Subsection \ref{ssec_pi11det_from_measurable}, our argument will not follow the original determinacy proof directly, since it is much more technically involved. Instead, we will show how the proof can be used as a blackbox, using classical techniques from inner model theory. We will prove the following Proposition:
	
	\begin{proposition} \label{prop_projective_detereminacy_in_intermediate_models}
		Suppose that $M$ is a model of $\ZFC$ with $n$ Woodin cardinals. Let $\P$ be a forcing poset with $|\P|$ smaller than the least measurable. Let $M \subseteq M^* \subseteq M[G]$, where $M^*$ is a symmetric extension and $G$ is a generic for $\P$. Then in $M^*$ $\bm{\Pi}^1_n$-determinacy holds. 
	\end{proposition}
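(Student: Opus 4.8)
The plan is to avoid re-running the Martin--Steel determinacy proof line by line (as was done for $\bm{\Pi}^1_1$ in Subsection~\ref{ssec_pi11det_from_measurable}) and instead reduce to the mouse-theoretic characterisation of projective determinacy. Recall the Neeman--Woodin equivalence (see, e.g., \cite{Mueller_thesis,Schindler_book}): a model of $\ZF$ satisfies $\bm{\Pi}^1_n$-determinacy if and only if for every real $x$ the mouse $M_{n-1}^{\#}(x)$ exists and is $\omega_1$-iterable. Crucially, the iterability is witnessed by the canonical $Q$-structure strategy described in the Preliminaries, and the implication from iterable mice to determinacy is carried out using these strategies, so that it does not appeal to $\DC_{\R}$. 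Thus it suffices to show that $M^*$ satisfies, uniformly in $x \in \R^{M^*}$, that $M_{n-1}^{\#}(x)$ exists and is $\omega_1$-iterable.

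First I would verify that the mice are available in $M[G]$. Since $|\P|$ is below the least measurable of $M$ and every Woodin cardinal is a limit of measurables, $\P$ is small below all $n$ Woodin cardinals $\delta_1 < \cdots < \delta_n$; by a standard Levy--Solovay-style argument these cardinals remain Woodin in $M[G]$. Hence $M[G]$ is a model of $\ZFC$ with $n$ Woodin cardinals, and using $\delta_1 < \cdots < \delta_{n-1}$ together with a measurable in the interval $(\delta_{n-1},\delta_n)$ the standard background construction yields that $M_{n-1}^{\#}(x)$ exists and is $\omega_1^{M[G]}$-iterable for every real $x \in \R^{M[G]} \supseteq \R^{M^*}$.

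Next I would push these objects down into $M^*$ using Lemma~\ref{lem_definable_subsets_symmetric_ext}. Fix $x \in \R^{M^*}$. The mouse $M_{n-1}^{\#}(x)$ is countable in $M[G]$ and projects to $\omega$ over $x$, so it is coded by a real $r$, characterised in $M[G]$ by a formula in the parameter $x$, namely ``$r$ codes the minimal $(n-1)$-Woodin premouse over $x$ that is iterable via its $Q$-structure strategy''. Writing $r = \set{m \in \omega}{M[G] \models \psi(m,x)}$, we have a subset of $\omega \in M^*$ definable in $M[G]$ from the parameter $x \in M^*$, so $r \in M^*$ by Lemma~\ref{lem_definable_subsets_symmetric_ext}; decoding inside $M^* \models \ZF$ gives $M_{n-1}^{\#}(x) \in M^*$. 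The same lemma transfers the strategy: for any tree $\mathscr{T} \in M^*$ on $M_{n-1}^{\#}(x)$, the branch $b = \Sigma(\mathscr{T})$ is the unique branch with $Q(b,\mathscr{T}) = Q(\mathscr{T})$, a condition of complexity $\Delta_0$, so $b$ is definable in $M[G]$ from $\mathscr{T}, x \in M^*$ and is a subset of the ordinal $\mathrm{lh}(\mathscr{T}) \in M^*$; hence $b \in M^*$. Collecting these, the restricted strategy $\Sigma \res M^*$ is a definable (in $M[G]$, from $x$) subset of $(\R^{M^*})^2 \in M^*$, so it too lies in $M^*$ and witnesses $\omega_1$-iterability there. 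Since the construction is uniform in $x$, $M^*$ satisfies ``$M_{n-1}^{\#}(x)$ exists and is $\omega_1$-iterable'' for all reals $x$, and the characterisation above yields $\bm{\Pi}^1_n$-determinacy in $M^*$.

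The main obstacle is the appeal to the mouse characterisation in the choiceless model $M^*$: the proof that iterable $M_{n-1}^{\#}(x)$'s yield $\bm{\Pi}^1_n$-determinacy must be executed without $\DC_{\R}$, since that principle fails in $M^*$. I expect this to be exactly where the canonical nature of $Q$-structure strategies is essential: because each relevant branch is uniquely and definably determined rather than chosen, the auxiliary-game argument producing the winning strategy is a $\ZF$-argument of the same local character as the closed-game reduction used in the measurable case, so no genuine dependent choice is needed. A secondary point to check is that the well-foundedness of the direct limits along the $Q$-structure branches, and the winning condition of the derived strategy, are absolute between $M[G]$ and $M^*$; these are $\Pi_1$-type assertions and transfer downward, closing the argument.
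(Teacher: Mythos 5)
Your proposal is correct and follows essentially the same route as the paper's proof: small forcing preserves the Woodin cardinals/mouse operators in $M[G]$, the mice together with their unique $Q$-structure branches are pulled down into $M^*$ via Lemma~\ref{lem_definable_subsets_symmetric_ext}, and determinacy then follows from the $\ZF$ form of Neeman's theorem (Theorem~\ref{th_determinacy_from_mice}, whose choiceless validity the paper likewise handles by citation to Busche--Schindler rather than by reproving it). The only real difference is one of implementation: where you transfer canonical real codes of the countable mice $M_{n-1}^{\#}(x)$ directly by definability, the paper instead proves absoluteness of the fully backgrounded construction $L[\vec{E}](A)$ between $M[G]$ and $M^*$ (via Schlutzenberg's uniqueness theorem), which covers arbitrary sets of ordinals in $M^*_{\delta_0}$ rather than just reals --- a more heavyweight but more general version of the same transfer step.
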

	
	\begin{theorem}[Neeman] \label{th_determinacy_from_mice}
		Suppose that $M$ is a model of $\ZF$ which is closed under the $M_n^{\#}$ operator. Then $\bm{\Pi}^1_{n+1}$-determinacy holds in $M$. 
	\end{theorem}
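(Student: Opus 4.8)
The plan is to reduce $\bm{\Pi}^1_{n+1}$-determinacy in $M$ to the determinacy of auxiliary closed games built on the mice supplied by the $M_n^{\#}$ operator, following Neeman's optimal-determinacy analysis (see also \cite{Schindler_book}). First I would pass from boldface to lightface relative to a parameter: a boldface $\bm{\Pi}^1_{n+1}$ set is $\Pi^1_{n+1}$ in some real $z \in M$, and since $M$ is closed under $A \mapsto M_n^{\#}(A)$ it is in particular closed under $X \mapsto M_n^{\#}(z,X)$, so it suffices to prove lightface $\Pi^1_{n+1}$-determinacy in the presence of the operator $x \mapsto M_n^{\#}(z,x)$. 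I would then fix such a set $A$ and analyse the associated Gale--Stewart game $G(A)$.

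The heart of the argument is a reflection of membership in $A$ into the behaviour of the mice $M_n^{\#}(z,x)$, which carry $n$ Woodin cardinals. By the correctness of these mice, the statement ``$x \in A$'' is captured by a well-foundedness condition on a \emph{genericity iteration}: one iterates $M_n^{\#}(z)$ through its Woodin cardinals, using Woodin's extender algebra, so that $x$ becomes generic over the iterate, and then reads off the truth of the relevant statement in the resulting model. Accordingly, I would set up an auxiliary game $G^{*}$ of length $\omega$ in which the players, alongside producing $x$, cooperatively build an iteration tree on $M_n^{\#}(z)$ that makes $x$ generic, with branches chosen canonically via the $Q$-structures recalled in the preliminaries. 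The payoff of $G^{*}$ is the purely combinatorial, closed condition that these $Q$-structures cohere and the resulting models are well-founded; by Theorem~\ref{th_closed_determinacy_without_ac} this game is determined, and crucially its determinacy needs no choice, which matters because $M$ is only assumed to model $\ZF$.

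It then remains to transfer a winning strategy in $G^{*}$ to one in $G(A)$. Here the $\omega_1$-iterability packaged into the assertion ``$M_n^{\#}(z)$ exists'' is essential: the $Q$-structure strategy provides, inside $M$, the canonical well-founded branches needed to run the genericity iteration against any play of the opponent, so a winning strategy for either player in $G^{*}$ converts into a winning strategy for the same player in $G(A)$ by simulating the iteration and discarding the iteration-theoretic moves. This furnishes a winner for $G(A)$, establishing $\bm{\Pi}^1_{n+1}$-determinacy.

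The step I expect to be the main obstacle is exactly this passage between the two games, on three counts. First, one must arrange the genericity iteration so that $x$ is made generic at each of the $n$ Woodin cardinals while keeping the payoff of $G^{*}$ genuinely closed. Second, and most delicately in the present setting, the fragment of the iteration strategy used to choose branches must be available and sufficiently definable inside $M$; since $M \models \ZF$ rather than $\ZFC$, I would lean on the explicit, choice-free nature of the $Q$-structure strategy rather than on any global well-ordering. Third, for even $n$ the iterability underlying the $M_n^{\#}$ operator itself presupposes a weaker fragment of determinacy, so the relevant capturing and correctness facts must be bootstrapped through the induction on $n$ rather than assumed outright. Once these are in place the conclusion is immediate, and in practice one simply invokes Neeman's theorem with its hypothesis matched to closure under the $M_n^{\#}$ operator.
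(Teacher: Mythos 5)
First, a point of comparison: the paper does not prove this theorem at all. It is quoted as Neeman's result, with Theorem 2.14 of \cite{Neeman_optimal2} cited for the $\ZFC$ version and Theorem 3.3 of \cite{Busche_Schindler} cited for the fact that it holds in pure $\ZF$. So the paper's ``proof'' is a citation, and your closing remark --- that in practice one simply invokes Neeman's theorem --- is exactly what the paper does; your instinct to worry about the $\ZF$/$\ZFC$ discrepancy is also the right one, and the paper discharges it by pointing to \cite{Busche_Schindler} rather than by reworking the proof.

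Judged as a proof attempt, however, your sketch has a genuine gap at precisely the step you flag as the main obstacle, and the gap is worse than you suggest. You claim that ``a winning strategy for either player in $G^{*}$ converts into a winning strategy for the same player in $G(A)$ by simulating the iteration and discarding the iteration-theoretic moves.'' Discarding moves only works for the player who controls the auxiliary objects (their strategy projects to the real game). For the other player the transfer is the entire mathematical content of the theorem: in Martin's level-one argument --- which the paper spells out in Subsection \ref{ssec_pi11det_from_measurable} --- that player's strategy is built by averaging the auxiliary moves over a $\kappa$-complete ultrafilter ($\tau(\bar n)=m$ iff the set of ordinal tuples yielding response $m$ has measure one), and at higher levels Neeman replaces this averaging by the construction of pivots and their realization via genericity iterations, which is where $\omega_1$-iterability and the $Q$-structure analysis are actually consumed. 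Nothing in your sketch supplies this mechanism, so no winner of $G^{*}$ can yet be converted into a winner of $G(A)$ for the problematic side. There are also secondary soundness issues in the setup itself: well-foundedness of limit models is not a closed condition on finite positions of a length-$\omega$ game, and a genericity iteration making $x$ extender-algebra generic generally has transfinite countable length, so arranging a genuinely closed payoff (to which Theorem \ref{th_closed_determinacy_without_ac} applies) requires Neeman's careful bookkeeping rather than the direct formulation you give. If the theorem is to be used as the paper uses it --- as a black box --- the correct move is the citation; if it is to be proved, the transfer step must be carried out, not asserted.
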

	
	Unfortunately, Theorem 2.14 in \cite{Neeman_optimal2} states it in the context of $\ZFC$. However, the result actually holds for $\ZF$ (for instance, it is stated in pure $\ZF$ context in \cite{Busche_Schindler}, Theorem 3.3). The proof of  Proposition \ref{prop_projective_detereminacy_in_intermediate_models}, uses the fact that forcing extensions preserve sharps (and, in fact, even the $M_n^{\#}$-operator). A proof can be found, for instance, in \cite{Busche_Schindler}, Lemma 3.7.
	
	\begin{lemma} \label{lem_sharps_in_forcing_extensions}
		Let $M$ be an arbitrary model of $\ZFC$. Suppose that 
		\begin{displaymath}
			M \models \textnormal{For any $A \subset \Ord$, $M_n^{\#}(A)$ exists}.
		\end{displaymath}
		Then for any $G \subset \P$ generic,
		\begin{displaymath}
			M[G] \models \textnormal{For any $A \subset \Ord$, $M_n^{\#}(A)$ exists}.
		\end{displaymath}
	\end{lemma}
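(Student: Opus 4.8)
The plan is to reduce to a single set $A \subseteq \Ord$ with $A \in M[G]$ and to build $M_n^{\#}(A)$ inside $M[G]$ out of a mouse already present in $M$, using the trick of coding the forcing into the base set (which is why no smallness hypothesis on $\P$ is needed here). Fix such an $A$, a $\P$-name $\dot A \in M$ for it, and a set of ordinals $B \in M$ coding the pair $\tuple{\P, \dot A}$. By hypothesis, $P := M_n^{\#}(B)$ exists and is $\omega_1$-iterable in $M$. The key bookkeeping point is that, since $P$ is built with $B$ as atoms, its $n$ Woodin cardinals $\delta_1 < \cdots < \delta_n$ (and hence the critical point of its top extender) lie above $\sup(B)$, so above the rank of $\P$. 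Thus $\P$ is a \emph{small} forcing inside $P$, below its least Woodin.

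Next I would push the generic down and force. As $P \subseteq M$ and $\P \in P$, every maximal antichain of $\P$ lying in $P$ also lies in $M$ and so is met by $G$; hence $G$ is $\P$-generic over $P$ and $P[G] \in M[G]$. Since $\P$ is small relative to $\delta_1$, the usual small-forcing preservation shows that $\delta_1 < \cdots < \delta_n$ remain Woodin in $P[G]$ and that the top extender of $P$ lifts to witness the sharp; moreover $A = \dot A^{G} \in P[G]$. So $P[G]$ is a model of a large fragment of $\ZFC$ which contains $A$ and has $n$ Woodin cardinals above $A$, with a top extender above them.

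I would then extract a premouse over $A$. Working inside $P[G]$, run a fully backgrounded $L[\vec E]$-construction relative to $A$ (as atoms), using the extenders of $P[G]$ as background certificates; below $\delta_n$ this yields a premouse $\mathcal N$ over $A$ with $n$ Woodin cardinals, and closing off with the lifted top extender produces a candidate $Q$ for $M_n^{\#}(A)$. It remains to verify that $Q$ really is $M_n^{\#}(A)$ in $M[G]$, the only nontrivial part of which is $\omega_1$-iterability.

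The main obstacle is exactly this iterability transfer, and it must be carried out in $M[G]$. Two standard facts do the work. First, $\omega_1$-iterability of $P$ in $M$ survives to $M[G]$: the iteration strategy one uses is the $Q$-structure strategy described in the preliminaries, whose branch choices are $\Delta_0$ in the relevant $Q$-structures, and those $Q$-structures are themselves mice available in $M[G]$ (a putative ill-founded tree in $M[G]$ would reflect back to $M$, where none exists). Second, the output of a backgrounded construction performed inside an iterable model inherits an iteration strategy from the background model, so iterability of $P[G]$ propagates to $\mathcal N$ and hence to $Q$. Granting these, $Q$ is an $\omega_1$-iterable, $n$-small premouse over $A$ with $n$ Woodin cardinals and a top extender, i.e.\ $Q = M_n^{\#}(A)$; since $A$ was arbitrary, $M[G]$ is closed under the $M_n^{\#}$ operator. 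A careful execution along these lines is \cite{Busche_Schindler}, Lemma 3.7.
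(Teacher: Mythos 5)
The paper never proves this lemma itself: it defers entirely to \cite{Busche_Schindler}, Lemma 3.7, and your outline is a faithful reconstruction of exactly that cited argument --- coding $\P$ and the name $\dot A$ into a set of ordinals $B$ so that $\P$ is automatically small inside $M_n^{\#}(B)$, forcing with $G$ over that mouse, relativizing down to $A$ by a backgrounded construction, and transferring $\omega_1$-iterability via $Q$-structure--guided strategies --- so you are taking essentially the same (outsourced) route as the paper. The one soft spot is your parenthetical justification of the iterability transfer: an ill-founded tree on $P$ in $M[G]$ need not lie in $M$, so nothing simply ``reflects back''; the correct mechanism is that the $Q$-structures for trees in $M[G]$ are supplied by closure of $M[G]$ under the $M_{n-1}^{\#}$-operator, which is an instance of this very lemma one level down, so the argument must be organized as an induction on $n$ together with an absoluteness argument for branch existence --- precisely the part that the proof of Lemma 3.7 in \cite{Busche_Schindler} executes carefully, so your concluding appeal to it is where the real work lives.
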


	Now, we will prove our proposition. 
	\begin{proof}[Proof of Proposition \ref{prop_projective_detereminacy_in_intermediate_models}] \
		
		Let $\delta_0<\ldots<\delta_{n-1}$ be the Woodin cardinals of $M$. Since the forcing is small, they are also Woodin in $M[G]$.
		\paragraph*{Claim I}
		We will show that for every $X \in M^*_{\delta_0}(:= M^* \cap V_{\delta_0})$, $ X \subseteq \Ord$,
		\begin{displaymath}
			M^*_{\delta_0} \models \textnormal{ There exists } M_{n-1}(X)^{\#}.
		\end{displaymath}
		First, notice that since the forcing is small, there exists $M_{n-1}(X)^{\#}$ in $M[G]_{\delta_0}.$ 
		
		Within $M[G]$, define:
		\begin{displaymath}
			F : = \set{\tuple{A,B}}{L[\vec{E}](A) \models B = A^{\#}},
		\end{displaymath}
		where $L[\vec{E}](A)$ is the fully backgrounded construction in $M[G]$ above $A$. Notice that the structure $L[\vec{E}](A)$ can be defined correctly in $M[G]$. In the model $M[G]$, the extenders are lift-ups of the extenders from $M$. By Theorem 5.1 of \cite{Schlutzenberg_uniqueness}, we know that running a backgrounded construction of $L[\vec{E}]$ will yield a unique partial extender at each step of the construction (actually, Theorem 9.1 of \cite{FSIT} is sufficient for this argument, as we can mimic the choices of either type I, II, III extenders made in $M$ throughout the whole construction). Therefore $L[\vec{E}](A)$ will be extended with a (globally) definable subset at each active step. Using inductively Lemma \ref{lem_definable_subsets_symmetric_ext}, we obtain $(L[\vec{E}](A))^{M*}=(L[\vec{E}](A))^{M[G]}$.

		Consider now $F' \in M^*$ defined in the same way as $F$. Notice that since the construction of $L[\vec{E}](A)$ is absolute between $M[G]$ and $M^*$, we have that for $A \in M^*$, $\tuple{A,B} \in F$ iff $\tuple{A,B} \in F'$. However, since $M[G]$ is a $\ZFC$ model, we can see that:
		\begin{displaymath}
			M[G] \models \textnormal{If } \tuple{A,B} \in F \textnormal{, then } B = M_{n-1}(A)^{\#}.  
		\end{displaymath}
		This finishes the proof of Claim I.
		
		\paragraph*{Claim II} Let $X \subseteq \Ord$ be an element of $M^*_{\delta_0}$ and let $k \leq n$. Let $Y = (M_{k}(X)^\#)^{M[G]}.$ Then
		\begin{displaymath}
			M^*_{\delta_0} \models Y = M_k(X)^{\#}.
		\end{displaymath}
		
		Fix $X$ and let $N = M_k(X)^{\#}$, as defined in $M_{\delta_0}[G]$. The definition of an $n$-small premouse is clearly absolute, so we only have to check whether $N$ is $\omega_1$-iterable in $M^*$ and whether it is the smallest such mouse (notice that \textit{prima facie}, a structure which is not iterable in $M[G]$ could be iterable in $M^*$ and \textit{vice versa}, since there are less trees to deal with, but also less branches to respond with).
		
		We will show that $N$ is $\omega_1$-iterable in $M^*$. By induction on $\alpha$, we show that if $\mathscr{T} \in M^*$ is an iteration tree on $N$ of a limit length $\alpha$, then the branch $b$, given by the $Q$-structure strategy is in $M^*$ (and, since $M^* \subset M[G]$, the model $M^{\mathscr{T}}_b$ is wellfounded). The induction step follows by Lemma \ref{lem_definable_subsets_symmetric_ext}, since the branch $b$ given by the $Q$-structure strategy is unique. 
		
		Wellfoundedness of the model computed by the $Q$-structure strategy in $M^*$ is obvious since $M^* \subset M[G]$. Therefore, by Theorem \ref{th_determinacy_from_mice}, $M^*_{\delta_0}$ satisfies $\bm{\Pi}^1_n$ -determinacy, and so does $M^*$.

	\end{proof}

\end{document}